\documentclass[reqno]{amsart}
\usepackage[margin=1.5in]{geometry}
\usepackage{palatino}
\usepackage{mathpazo}
\usepackage{amsmath, amssymb, amsthm}
\usepackage{tikz}
\usepackage{mathrsfs}
\usepackage{hyperref}

\newcommand{\bfem}[1]{\textbf{\textit{#1}}}
\newcommand{\LL}{\mathbb{L}}
\newcommand{\TT}{\mathbb{T}}
\newcommand{\Gm}{\mathbb{G}_m}
\newcommand{\cO}{\mathcal{O}}
\newcommand{\cL}{\mathcal{L}}
\newcommand{\K}{\mathbb{K}}

\newtheorem{theorem}{Theorem}[section]

\newtheorem{proposition}[theorem]{Proposition}

\newtheorem{corollary}[theorem]{Corollary}

\begin{document}

\title{A Derived Legendrian Category for Shifted Contact Stacks}
\author{Efe \.{I}zbudak}
\address{Department of Mathematics, METU, 06800, Ankara, T\"urkiye}
\email{efe.izbudak@metu.edu.tr}
\author{Kadri \.{I}lker Berktav}
\address{Department of Mathematics, METU, 06800, Ankara, T\"urkiye}
\email{berktav@metu.edu.tr}

\begin{abstract}
We construct the derived Legendrian category $\mathcal{F}_{c}(X)$ for an $n$-shifted contact derived Artin stack $X$ and the $(\infty,2)$-category $Leg_n$ of Legendrian correspondences in the context of derived algebraic geometry, with several applications to moduli theory. The objects of the category $\mathcal{F}_{c}(X)$ are Legendrian morphisms; the morphism spaces and composition operations are defined using equivariant descent. We also establish that $\mathcal{F}_{c}(X)$ embeds into an $(\infty, 2)$-category of spans defined by the AKSZ construction. We further evaluate topological cobordisms as Lagrangian correspondences to define derived Legendrian surgery.
\end{abstract}
\maketitle
\tableofcontents

\section{Introduction}

The $(\infty, 2)$-category of shifted symplectic derived stacks, often denoted $Lag_n$ or $Symp_n$, defines the geometric relations between derived moduli spaces. In this setting, the morphisms between shifted symplectic stacks are given by Lagrangian correspondences, with composition defined via derived intersections. This categorical formulation of spans is central to the AKSZ construction and extended topological field theories, as studied by Calaque, Haugseng, and Scheimbauer \cite{CHS}.
\vspace{1in}

In certain situations in derived symplectic geometry, the symplectic structure is not fixed by group actions, but is only fixed up to a character, meaning that the resulting quotient is not shifted symplectic. An example is the discrete scaling of the Frobenius acting on the derived moduli of geometric $\ell$-adic local systems on a smooth projective variety over a finite field. Regarding the geometry of such spaces, \.{I}zbudak and Berktav have shown in \cite{IzbudakBerktav} that such examples of moduli problems, including projective Higgs bundles, the contact mapping torus of $\ell$-adic local systems, and Lie 2-groups, are encoded better in the context of contact geometry rather than the symplectic setting. In these cases, the quotient is no longer shifted symplectic; instead, it has some form of contact structures, which we call \textit{shifted contact structures} in the sense of \cite{Berktav1, Berktav2}. 

To formalize the geometric relations between derived moduli spaces that are not symplectic as discussed above, the "odd-dimensional" counterpart, \textit{shifted contact geometry}, requires a corresponding categorical foundation where Lagrangian correspondences should be replaced by the suitable notion of Legendrian correspondence. As established in \cite{IzbudakBerktav}, the geometric relation between these two settings is governed by the derived symplectification (or the contact reduction) so that an $n$-shifted contact structure on a derived Artin stack $X$ corresponds to a weight 1 $\Gm$-equivariant $n$-shifted symplectic structure on its principal $\Gm$-bundle $\widetilde{X}$. Thus, defining a category of contact stacks relies on computing the equivariant descent of derived Lagrangian intersections.
\vspace{0.1in}

\paragraph{\bf Results of the paper.} In this paper, we construct the \textit{derived Legendrian category} $\mathcal{F}_{c}(X)$ for an $n$-shifted contact derived Artin stack $X$. The \textit{objects} of this category are Legendrian morphisms into $X$. We define the \textit{morphism spaces} and their \textit{composition operations} via equivariant descent from the derived symplectification $\widetilde{X}$. To establish the higher categorical structure, we lift the contact data to $\widetilde{X}$, where the weight 1 $\Gm$-action imposes homogeneity conditions on the shifted symplectic forms. The composition of derived intersections is evaluated using the $(\infty, 2)$-category of Lagrangian spans.

In brief, we establish the proof of the following theorem (cf. Section~\ref{sec:morphisms} and Section~\ref{sec:higher_categories}), along with several applications.

\begin{theorem} \label{thm:A}
Let $X$ be an $n$-shifted contact derived Artin stack. There exists a well-defined \emph{derived Legendrian category} $\mathcal{F}_{c}(X)$ satisfying the following properties:
\begin{enumerate}\itemsep=7pt
    \item The objects of $\mathcal{F}_{c}(X)$ are Legendrian morphisms $f \colon L \to X$.
    \item The 1-morphism objects $\mathcal{H}om_{\mathcal{F}_{c}(X)}(L_{1}, L_{2})$ descend from the derived intersections of $\Gm$-equivariant Lagrangian structures defined on the derived symplectification $\widetilde{X}$.
    \item The 2-morphisms and higher composition operations descend from the derived symplectification via the weight 1 $\Gm$-action and are given by the $(\infty, 2)$-category of Lagrangian spans.
\end{enumerate}
\end{theorem}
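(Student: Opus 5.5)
The plan is to build $\mathcal{F}_{c}(X)$ by transport from the symplectification rather than intrinsically on $X$. By the correspondence of \cite{IzbudakBerktav}, the $n$-shifted contact structure on $X$ is the same datum as a principal $\Gm$-bundle $\pi\colon \widetilde{X}\to X$ carrying a weight~$1$ $\Gm$-equivariant $n$-shifted symplectic form $\widetilde{\omega}$. The first step is the corresponding statement for Legendrians. A Legendrian morphism $f\colon L\to X$ should be the same as a $\Gm$-equivariant Lagrangian structure on the pullback bundle $\widetilde{L}=L\times_X\widetilde{X}\to\widetilde{X}$. Concretely, the isotropic homotopy $\widetilde{f}^{*}\widetilde{\omega}\sim 0$ can be chosen of weight~$1$, and the nondegeneracy condition on $\mathbb{L}_{\widetilde{L}/\widetilde{X}}$ descends to the contact nondegeneracy condition on $\mathbb{L}_{L/X}$ twisted by the line bundle $\cL$ associated to $\pi$. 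This gives item~(1): the objects are Legendrian morphisms, and we may equivalently regard them as objects of the $\Gm$-equivariant Lagrangian $\infty$-groupoid of $\widetilde{X}$.

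For item~(2), given Legendrians $L_1,L_2$, form the derived intersection $\widetilde{L}_1\times_{\widetilde{X}}\widetilde{L}_2$. By the Lagrangian intersection theorem (Pantev--To\"en--Vaqui\'e--Vezzosi, as used in \cite{CHS}), it carries an $(n-1)$-shifted symplectic structure. It is $\Gm$-equivariant, and the form has weight~$1$ because both nullhomotopies do. Since $\Gm$ acts freely, the stack quotient is
\[
[\widetilde{L}_1\times_{\widetilde{X}}\widetilde{L}_2/\Gm]\simeq L_1\times_X L_2 .
\]
We set $\mathcal{H}om_{\mathcal{F}_c(X)}(L_1,L_2)$ to be this quotient, equipped with the descended $(n-1)$-shifted contact structure, again via \cite{IzbudakBerktav}. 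The required check is that descent along the free $\Gm$-quotient commutes with the fibre products involved. This is formal, because quotienting by a free action is a colimit preserved by base change in derived stacks.

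For item~(3), work in the $(\infty,2)$-category of Lagrangian spans $Lag_n$ of \cite{CHS}. Consider its $\Gm$-equivariant, weight~$1$ version, obtained by taking homotopy fixed points of the $B\Gm$-action that scales forms by the weight character. Equivalently, consider the sub-$(\infty,2)$-category whose objects and $k$-morphisms carry compatible $\Gm$-actions with weight~$1$ forms. Composition of $1$-morphisms is the composite span
\[
\widetilde{L}_1\times_{\widetilde{X}}\widetilde{L}_2\times_{\widetilde{L}_2}\widetilde{L}_2\times_{\widetilde{X}}\widetilde{L}_3 ,
\]
and $2$-morphisms are Lagrangian correspondences between intersections. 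The $\Gm$-equivariance is preserved by every operation in the complete Segal/iterated span model, since these are built from finite limits that commute with the equivariance data. Taking $\Gm$-quotients levelwise then defines $\mathcal{F}_c(X)$ as an $(\infty,2)$-category, and the associativity and coherence data are inherited from $Lag_n$.

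The main obstacle is the coherence in step three. One must show that the weight~$1$ condition, which is a homotopy-coherent choice of equivariant structures on closed forms and their nullhomotopies, is stable under the higher compositions. Equivalently, the weight~$1$ equivariant span construction must still satisfy the Segal conditions. My first attempt would be to realise weight~$1$ equivariant closed forms as sections of the twisted de~Rham complex $\mathcal{A}^{2,cl}(-)\otimes\cL$ over $[\,\cdot/\Gm]$. Then the whole construction of \cite{CHS} can be rerun with this twisted coefficient system in place of $\mathcal{A}^{2,cl}(-,n)$. The only inputs their proof uses are functoriality and the fact that the coefficients send pullbacks to limits, and I expect both to hold for the twisted system.
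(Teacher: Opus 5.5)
Your proposal is correct at the level of the paper and follows the same route. Legendrians are handled through their $\Gm$-equivariant Lagrangian lifts, $\mathcal{H}om_{\mathcal{F}_c(X)}(L_1,L_2)$ is the $\Gm$-quotient of $\widetilde{L}_1\times^h_{\widetilde{X}}\widetilde{L}_2$ with the contact structure descended via Theorem~\ref{prev_main} and \cite{IzbudakBerktav}, and composition is the triple-intersection span formed in $\Gm$-equivariant stacks, with coherence taken from \cite{CHS} (Theorem~\ref{thm:span}).

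The coherence step you flag is handled in the paper by the functoriality you anticipate: equivariant pullbacks preserve the weight grading of the derived de Rham complex \cite[Section 2.1.1]{Calaque}. Two corrections to your version of that step. First, the coefficient system to use is the weight-$1$ part of $\mathbf{DR}(\widetilde{(-)})$, not a literal $\cL$-twist of $\mathcal{A}^{2,\mathrm{cl}}$ on the base; $\cL$-valued forms have no de Rham differential without a flat connection on $\cL$. Second, non-degeneracy of composites is not a formal consequence of functoriality and limit-preservation, contrary to your claim about what \cite{CHS} uses. It is instead inherited from the non-equivariant $Lag_n$, because non-degeneracy can be checked after forgetting the $\Gm$-action.
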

As a consequence, we promote the collection of $n$-shifted contact derived stacks into a $(\infty, 2)$-category by descending the $\Gm$-equivariant $(\infty, 2)$-category of Lagrangian spans. Namely, we have:

\begin{corollary}\label{cor:A}
    There exists an \emph{$(\infty,2)$-category $Leg_n$ of Legendrian correspondences} whose objects are $n$-shifted derived contact stacks, 1-morphisms are Legendrian correspondences, and 2-morphisms are Legendrian spans (cf. Corollary \ref{defn:Leg_n}).
\end{corollary}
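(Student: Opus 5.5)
The plan is to build $Leg_n$ as a subcategory of $\Gm$-fixed points: take the $(\infty,2)$-category of Lagrangian correspondences $Lag_n$ of Calaque--Haugseng--Scheimbauer \cite{CHS}, equip it with its natural $\Gm$-structure, restrict to weight~1 objects, and then transport everything along the symplectification equivalence of \cite{IzbudakBerktav}.

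\textbf{Step 1: the equivariant span category.} Form the $(\infty,2)$-category $Lag_n^{\Gm}$. Its objects are derived Artin stacks $\widetilde{X}$ with a $\Gm$-action and a $\Gm$-equivariant $n$-shifted symplectic form of weight~1. Its 1-morphisms are $\Gm$-equivariant Lagrangian correspondences $\widetilde{X}\leftarrow \widetilde{C}\rightarrow \widetilde{Y}$ whose isotropic structures are homotopies of weight~1. Its 2-morphisms are equivariant Lagrangian spans between such correspondences. I would construct this as the $(\infty,2)$-category of spans in $\Gm$-equivariant stacks (equivalently, stacks over $B\Gm$) decorated by weight-1 closed forms, following the CHS iterated-span formalism.

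The central point is that composition of equivariant correspondences is given by a derived fiber product. That fiber product carries an induced $\Gm$-action. The Lagrangian structure it inherits comes from the standard Lagrangian-intersection theorem, which concatenates the two nullhomotopies. Because this construction is natural in the homotopies, weight~1 is preserved. Hence the CHS coherence data restrict to the equivariant setting.

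\textbf{Step 2: descent.} Invoke the equivalence from \cite{IzbudakBerktav} between $n$-shifted contact stacks $X$ and principal $\Gm$-bundles $\widetilde{X}\to X$ carrying weight-1 equivariant symplectic forms. In the relative form of this statement, Legendrian morphisms $L\to X$ correspond to $\Gm$-equivariant Lagrangians $\widetilde{L}=L\times_X\widetilde{X}\to \widetilde{X}$. The result is needed for correspondences $C\to X\times Y$, where the relevant symplectification is the weight-1 anti-diagonal quotient $(\widetilde{X}\times\widetilde{Y})/\Gm$, so I would first check that this relative version holds.

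Given that, define $Leg_n$ as the full sub-$(\infty,2)$-category of $Lag_n^{\Gm}$ on objects whose $\Gm$-action is free, meaning principal bundles. Morphisms and 2-morphisms are then defined by quotienting the equivariant spans by $\Gm$. Each of these quotients is a stack over $X\times Y$, and descent for $\Gm$-torsors identifies it with a Legendrian correspondence or a Legendrian span. Since taking the quotient by $\Gm$ commutes with fiber products of free $\Gm$-stacks over a common base, the composition descends. All the higher coherences are therefore inherited from $Lag_n^{\Gm}$. Theorem~\ref{thm:A} supplies exactly this for the endomorphism case at a fixed $X$.

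\textbf{Main obstacle.} The hardest step is verifying that the composite of two weight-1 correspondences is again a Legendrian correspondence. The difficulty is that the middle fiber product over $\widetilde{Y}$ must be taken equivariantly, which amounts to a quotient by the diagonal $\Gm$ and not a naive product. One must also check that the induced contact data on $C\times_Y C'$ are nondegenerate. My first attempt would be to compute the tangent complex of $C\times_Y C'$ via the fiber sequence for the quotient by $\Gm$. The Euler vector field then splits off a line, and the remaining complex is the symplectic nondegeneracy of the equivariant intersection, twisted by $\cO(1)$. From there I would deduce nondegeneracy of the Legendrian composite from the Lagrangian one.
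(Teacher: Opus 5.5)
Your proposal is correct and takes essentially the paper's route. Corollary~\ref{defn:Leg_n} is obtained by descending the $\Gm$-equivariant $(\infty,2)$-category of weight-1 Lagrangian spans, and the equivariance and weight-preservation argument of Theorem~\ref{thm:span} is what supplies your Step~1. Your requirement that correspondences be equivariant for the diagonal action on $\widetilde{X}\times\widetilde{Y}$ makes explicit a datum the paper leaves implicit. That datum is what makes $C$ land in the contact product, which is the quotient by the diagonal $\Gm$, not an anti-diagonal one.

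Your ``main obstacle'' in fact dissolves. Legendrian-ness is defined by the Lagrangian condition on the equivariant lift, and for $\Gm$-torsors $(\widetilde{C}\times_{\widetilde{Y}}\widetilde{C}')/\Gm\simeq C\times_Y C'$. So nondegeneracy of the composite is inherited directly from the equivariant Lagrangian composite, and no tangent-complex computation is needed.
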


As an application, we show that the derived Legendrian category $\mathcal{F}_{c}(X)$ encodes geometric and algebraic structures on certain derived moduli spaces in the sense that the composition operations, endomorphism algebras, and functoriality define the non-commutative and shifted geometry of these spaces. In that respect, we have:

\begin{corollary}\label{cor:B}
    \begin{enumerate}\itemsep=7pt
        \item Denote by $(J^1(L), \xi_{jet})$ the 1-jet space with $L$ a smooth $\K$-scheme. 
        
    \noindent Let $f \in \cO(L)$ and $F$ be the associated prolongation map defining the Legendrian embedding $L_F$ of $L$ into $J^1(L)$. Then the derived discriminant locus $\Delta\mathrm{loc}(f)$ represents the morphism space $\mathcal{H}om_{\mathcal{F}_{c}(J^1(L))}(L_F, L_0)$ such that for two regular functions $f_1$ and $f_2$ on $L$, the composition span defined in $\mathcal{F}_{c}(J^1(L))$ yields a derived convolution map (cf. Propositon \ref{prop: disclocus})
\[
    \mathcal{H}om(L_{F_1}, L_0) \times \mathcal{H}om(L_{F_2}, L_{F_1}) \to \mathcal{H}om(L_{F_2}, L_0).
\]
    \item Consider the derived moduli stack $P\mathrm{Higgs}_G(C)$ of projective Higgs bundles for a smooth proper curve $C$ over $\K$ and a reductive  group $G.$
    Then the projectivized derived nilpotent cone $\mathbb{P}\mathcal{N}$ is an object in the derived Legendrian category $\mathcal{F}_{c}(P\mathrm{Higgs}_G(C))$ such that the categorical composition in $\mathcal{F}_{c}(P\mathrm{Higgs}_G(C))$ equips the homotopy pullback \[ \mathbb{P}\mathcal{N} \times_{P\mathrm{Higgs}_G(C)}^h \mathbb{P}\mathcal{N} \] with an $A_\infty$-algebra structure in the $(\infty, 2)$-category of Legendrian correspondences (cf. Proposition \ref{prop: nilpotentcone}).
    \item The \emph{derived Legendrian surgery} is defined using the AKSZ construction, mapping oriented topological cobordisms to equivariant Lagrangian spans that descend to Legendrian correspondences in the contact stack. 
    That is, the AKSZ construction extends to a symmetric monoidal $\infty$-functor \[ \mathcal{Z}_{c} \colon \mathrm{Bord}_d \to Leg_{k-d+1} \] that captures derived Legendrian surgery (cf. Proposition \ref{prop:AKSZ}).
    \end{enumerate}
\end{corollary}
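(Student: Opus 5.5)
Each of the three parts of Corollary~\ref{cor:B} comes from a single mechanism. We lift the contact data to the derived symplectification, where the weight~1 $\Gm$-equivariant $n$-shifted symplectic structure is available. We compute the relevant Lagrangian intersections or AKSZ spans there, equivariantly. We then descend along the $\Gm$-quotient, using Theorem~\ref{thm:A} (and Corollary~\ref{cor:A} for the $(\infty,2)$-structure). Each part therefore reduces to three things: identifying the Legendrian objects as $\Gm$-quotients of conical Lagrangians, computing the equivariant derived intersection, and recognizing the descended object.

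\emph{Part (1).} The symplectification of $(J^1(L),\xi_{jet})$ is identified $\Gm$-equivariantly with $T^*(L\times \Gm)$, or equivalently with an open piece of $T^*(L\times \mathbb{A}^1)$. Under this identification:
\begin{itemize}
\item the Legendrian $L_F$ lifts to the conical Lagrangian graph of $d(tf)$, with $t$ the fibre coordinate;
\item $L_0$ lifts to the conical zero section.
\end{itemize}
The derived intersection of these lifts is the derived critical locus of $tf$. Quotienting by the weight~1 action cuts this down to the derived locus $\{f=0,\ df=0\}$, i.e.\ the derived discriminant locus $\Delta\mathrm{loc}(f)$. By Theorem~\ref{thm:A}(2), $\Delta\mathrm{loc}(f)$ then represents $\mathcal{H}om(L_F,L_0)$. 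For the convolution map, apply the composition of Theorem~\ref{thm:A}(3) to the triple $L_{F_2},L_{F_1},L_0$. Composition is the triple homotopy fibre product over $\widetilde{X}$, descended. Translating $L_{F_i}\cap L_{F_j}$ to $\Delta\mathrm{loc}(f_i-f_j)$ via the fibrewise shear automorphism of $J^1(L)$ gives the stated map.

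\emph{Part (2).} Use the description from \cite{IzbudakBerktav}: $P\mathrm{Higgs}_G(C)$ is the $\Gm$-quotient of the $0$-shifted symplectic stack $T^*\mathrm{Bun}_G(C)$, with fibre scaling of weight~1. The derived nilpotent cone $\mathcal{N}$ is a conical Lagrangian, being the fibre of the Hitchin map over $0$. Hence $\mathbb{P}\mathcal{N}=\mathcal{N}/\Gm$ is Legendrian, which is an object by Theorem~\ref{thm:A}(1). The self-intersection $\mathbb{P}\mathcal{N}\times^h\mathbb{P}\mathcal{N}$ is an endomorphism object. In the $(\infty,2)$-category $Leg_n$ of Corollary~\ref{cor:A}, endomorphisms of any object form an algebra (an $E_1$, i.e.\ $A_\infty$, object) for composition. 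Its associativity data is supplied by the coherences of the span category, descended.

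\emph{Part (3).} Run the AKSZ construction of \cite{CHS} with the $\Gm$-equivariant $k$-shifted symplectic target $\widetilde{X}$. Because the AKSZ transgression is natural in the target, it is equivariant. For a closed oriented $d$-manifold $M$, the stack $\mathrm{Map}(M_B,\widetilde{X})$ is therefore $\Gm$-equivariantly $(k-d)$-shifted symplectic of weight~1. Cobordisms map to equivariant Lagrangian spans. Descending gives $(k-d+1)$-shifted contact stacks and Legendrian correspondences.

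The hard point is this last descent. The quotient of the mapping stack is not literally $\mathrm{Map}(M_B,X)$; it must be defined as $\mathrm{Map}(M_B,\widetilde{X})/\Gm$. Checking that the shift becomes $k-d+1$ requires tracking the contact-form convention that produces the $+1$. Symmetric monoidality then requires showing that descent commutes with products. For this I would use the fact that $\Gm$-quotient is a symmetric monoidal functor on equivariant spans, taking diagonal actions, and then apply the functoriality of Corollary~\ref{cor:A}.
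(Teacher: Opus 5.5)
Parts (1) and (2) of your proposal follow the paper's route. Objects come from Theorem~\ref{thm:A}(1), and hom objects are descended equivariant intersections. Composition is the triple fibre product of Theorem~\ref{thm:span}. In (2), $\mathbb{P}\mathcal{N}$ is Legendrian by \cite[Theorem 4.1]{IzbudakBerktav}, and coherence then gives the $A_\infty$-structure. Your extra computations in (1) are correct and sharper than the paper's. First, $\widetilde Z\simeq\mathrm{dCrit}(tf)$ on $L\times\Gm$, and its $\Gm$-quotient is the derived zero locus of $(f,df)$. Second, the strict contactomorphism $(q,z,p)\mapsto(q,z-f_1,p-df_1)$ identifies $\mathcal{H}om(L_{F_2},L_{F_1})$ with $\Delta\mathrm{loc}(f_2-f_1)$.

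The genuine error is the shift bookkeeping in (3). In this paper an $n$-shifted contact structure on $X$ \emph{is} a weight~1 $n$-shifted symplectic form on $\widetilde X$, so descent along $\widetilde X\to[\widetilde X/\Gm]$ never changes the shift. There is no ``contact-form convention that produces the $+1$'', and the step you flag as the hard point cannot be carried out.
\begin{itemize}
\item For a closed $d$-manifold $M$, $[\mathrm{Map}(M_B,\widetilde X)/\Gm]$ is $(k-d)$-shifted contact. Such $M$ are endomorphisms of $\emptyset$ in $\mathrm{Bord}_d$, not objects.
\item The correct count is the paper's. Objects of $\mathrm{Bord}_d$ are closed $(d-1)$-manifolds $Y$. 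Theorem~\ref{ptvv_a} makes $\mathrm{Map}(Y,\widetilde X)$ weight~1 $(k-d+1)$-shifted symplectic, so $X_Y$ is $(k-d+1)$-shifted contact with no further shift.
\item The $d$-dimensional cobordisms give the equivariant Lagrangians of Proposition~\ref{prop:equiv_mapping_stack}.
\item Functoriality comes from $\mathrm{Map}(W_1\cup_Y W_2,\widetilde X)\simeq\mathrm{Map}(W_1,\widetilde X)\times^h_{\mathrm{Map}(Y,\widetilde X)}\mathrm{Map}(W_2,\widetilde X)$ together with Theorem~\ref{thm:span}. Your sketch leaves this step implicit.
\end{itemize}

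Your monoidality step is also false as stated. For the diagonal action, $[(A\times B)/\Gm]\to[A/\Gm]\times[B/\Gm]$ is a $\Gm$-torsor, not an equivalence. Hence $X_{Y_1\sqcup Y_2}$ is a $\Gm$-torsor over $X_{Y_1}\times X_{Y_2}$, and descent does not commute with cartesian products. The argument works only if $Leg$ carries the contact product $X_1\star X_2=[(\widetilde X_1\times\widetilde X_2)/\Gm]$ with unit $B\Gm=[\mathrm{pt}/\Gm]$. The paper leaves this monoidal structure unspecified.

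There are two smaller points in (2).
\begin{itemize}
\item In an $(\infty,2)$-category, the endomorphisms of an object form a monoidal category; that does not make a single endomorphism an algebra. The $A_\infty$-structure comes from $\mathcal{E}nd(\mathbb{P}\mathcal N)$ being a hom object of $\mathcal F_c(X)$. It is therefore a monoid in $Leg_{-1}$ under the composition correspondence, not in $Leg_n$.
\item ``Fibre of the Hitchin map over $0$'' does not prove the Lagrangian property. For $G=GL_r$ and $C$ of genus $g\ge 2$, that derived fibre has virtual dimension $r^2(g-1)-1$, which is one less than $\tfrac12\dim T^*\mathrm{Bun}_{GL_r}$. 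Rely on the citation instead, as the paper does, working with $\mathcal N^\circ$ off the zero section.
\end{itemize}
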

\paragraph{\bf Organization of the paper}
Section~\ref{sec:foundations} recalls the foundations of derived symplectic geometry and Lagrangian morphisms. Section~\ref{sec:aksz} reviews the AKSZ construction and derived mapping stacks. Section~\ref{sec:symplectification} defines the derived symplectification of contact stacks and characterizes Legendrian morphisms. Section~\ref{sec:morphisms} details the construction of morphism spaces in $\mathcal{F}_{c}(X)$ via derived Legendrian intersections. Section~\ref{sec:higher_categories} establishes the higher categorical operations using iterated Lagrangian spans. Section~\ref{sec:cobordisms} applies the AKSZ construction to topological cobordisms to define derived Legendrian surgery. Section~\ref{sec:proof} contains the proofs of Theorem \ref{thm:A}, Corollary \ref{cor:A}, and Corollary \ref{cor:B}. Section~\ref{sec:applications} discusses applications to derived discriminant loci and the nilpotent cone.
\vspace{0.1in}

\paragraph{\bf Conventions and notations.} Throughout the paper, $ \mathbb{K} $ will be an algebraically closed field of characteristic zero. All cdgas will be graded in nonpositive degrees and over $\mathbb{K}.$ We always consider $\K$-schemes/stacks, and we assume that all classical $ \K $-schemes are locally of finite type, and that all derived $ \K $-schemes/stacks $ {X} $ are locally finitely presented.
\section{Preliminaries on Derived Symplectic Geometry}\label{sec:foundations}

We recall the foundations of derived algebraic geometry and shifted symplectic geometry. Let us start with some terminology and notations.

A \bfem{derived Artin stack} $X$ is a functor from the $\infty$-category of simplicial commutative $\K$-algebras to simplicial sets satisfying \'etale hyperdescent and admitting smooth atlases. 
\vspace{1in}

Let $L_{qcoh}(X)$ denote the \bfem{stable $\infty$-category of quasi-coherent complexes} on $X$. The deformation theory of $X$ is defined by the \bfem{cotangent complex} $\LL_{X} \in L_{qcoh}(X)$, which is the quasi-coherent complex of derived K\"{a}hler differentials. The \bfem{tangent complex} $\TT_{X}$ is defined as the derived dual complex $\mathbb{R}\mathcal{H}om(\LL_{X}, \cO_{X})$.

By \cite[Definition 1.12]{PTVV}, the \bfem{space of closed $p$-forms of degree $n$} on $X$ is defined via the mapping space $\mathrm{Map}_{\epsilon-\mathbf{dg}_\K^{gr}}(\K, \mathbf{DR}(X)[n+p](p))$ in the $\infty$-category of graded mixed $\K$-complexes. An underlying $p$-form of degree $n$ induces a class in the cohomology group $H^{n}(X, \wedge^{p} \LL_{X})$. An \bfem{$n$-shifted symplectic structure} on $X$ is a closed 2-form $\omega \in \mathcal{A}^{2, \mathrm{cl}}(X, n)$ whose underlying non-degenerate 2-form induces an equivalence 
\[
    \TT_{X} \xrightarrow{\sim} \LL_{X}[n]
\]in the stable $\infty$-category $L_{qcoh}(X)$.

By \cite[Theorem 2.1]{Calaque}, the shifted cotangent stack $T^{*}[n]Y$ over any derived Artin stack $Y$ has an $n$-shifted symplectic structure. The shifted cotangent stack is defined via the relative spectrum of the symmetric algebra generated by the shifted tangent complex $\TT_{Y}[-n]$. The symplectic structure is defined by the derived Liouville 1-form associated with the universal property of the cotangent complex.

Let $X$ be an $n$-shifted symplectic derived stack. Let $f \colon L \to X$ be a morphism of derived stacks. An \bfem{isotropic structure} on $f$ is a path $h$ defining a homotopy between the pullback $f^{*}\omega$ and the zero form $0$ in the space of closed 2-forms $\mathcal{A}^{2, \mathrm{cl}}(L, n)$. The isotropic structure $h$ induces a morphism of complexes
\[
    \Theta_h:\TT_{L} \to \LL_{L/X}[n-1].
\]
A \bfem{Lagrangian structure} on $f$ consists of an isotropic structure such that the induced morphism $\Theta_h$ is an equivalence in the stable $\infty$-category $L_{qcoh}(L)$. 

By \cite[Theorem 2.9]{PTVV}, the derived intersection of Lagrangian structures produces shifted symplectic structures. Let $L_{1}$ and $L_{2}$ be two derived stacks equipped with Lagrangian morphisms $f_{1}$ and $f_{2}$ to $X$. 
The homotopy pullback $Z \simeq L_{1} \times_{X}^{h} L_{2}$ has an $(n-1)$-shifted symplectic structure. Namely, we have:

\begin{proposition}
The homotopy pullback $Z \simeq L_{1} \times_{X}^{h} L_{2}$ has an $(n-1)$-shifted symplectic structure induced by the isotropic homotopies.
\end{proposition}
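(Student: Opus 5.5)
The plan is to follow the standard PTVV argument (\cite[Theorem 2.9]{PTVV}), in two steps: first build a closed 2-form of degree $n-1$ on $Z$, then check that it is non-degenerate using the two Lagrangian conditions.

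Write $p_i \colon Z \to L_i$ for the projections and $g = f_1 p_1 \simeq f_2 p_2 \colon Z \to X$. The square gives a canonical homotopy $f_1 p_1 \simeq f_2 p_2$. This yields two paths in $\mathcal{A}^{2,\mathrm{cl}}(Z,n)$ from $g^*\omega$ to $0$, namely $p_1^*h_1$ and $p_2^*h_2$, after transporting both along that homotopy. Concatenating the first path with the inverse of the second gives a loop at $0$ in $\mathcal{A}^{2,\mathrm{cl}}(Z,n)$. Since $\mathcal{A}^{2,\mathrm{cl}}(Z,n-1) \simeq \Omega_0 \mathcal{A}^{2,\mathrm{cl}}(Z,n)$ (the shift in the mixed graded mapping space definition), this loop is a closed 2-form $\omega_Z$ of degree $n-1$. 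Its underlying 2-form is the difference of the two null-homotopies of $g^*\omega$, viewed as a class in $H^{n-1}(Z,\wedge^2\LL_Z)$.

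For non-degeneracy, the key tool is the cotangent exact triangle of the pullback square. Since the square is homotopy cartesian, $\LL_{Z/L_2} \simeq p_1^*\LL_{L_1/X}$ and $\LL_{Z/L_1} \simeq p_2^*\LL_{L_2/X}$. Dually, there is a fibre sequence
\[
\TT_Z \to p_1^*\TT_{L_1}\oplus p_2^*\TT_{L_2} \to g^*\TT_X,
\]
and a corresponding one for $\LL_Z[n-1]$:
\[
g^*\LL_X[n-2]\to \LL_Z[n-1] \to \dots
\]
To organise these I would use the triangle $\LL_X|_Z \to \LL_{L_1}|_Z\oplus\LL_{L_2}|_Z \to \LL_Z$. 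Its terms can be rewritten using the Lagrangian equivalences $\Theta_{h_i}\colon \TT_{L_i}\simeq \LL_{L_i/X}[n-1]$ and the symplectic equivalence $\TT_X\simeq\LL_X[n]$.

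The plan is to build a map of fibre sequences from the tangent sequence to the shifted dual of the cotangent sequence. On the outer terms this map is given by $\omega$ and by $\Theta_{h_1}\oplus\Theta_{h_2}$, each pulled back to $Z$. On $\TT_Z$ it should be the contraction with $\omega_Z$. The five lemma in the stable $\infty$-category $L_{qcoh}(Z)$ then shows that $\omega_Z^\flat \colon \TT_Z\to\LL_Z[n-1]$ is an equivalence.

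The main obstacle is compatibility: showing that the map induced on $\TT_Z$ by the outer equivalences really is $\omega_Z^\flat$. This needs the commutative diagrams to be built coherently, i.e.\ homotopies between homotopies, not just maps on cohomology. I would first reduce to a pointwise or local statement using that non-degeneracy can be tested after pulling back along $\K$-points, or along a smooth atlas of $Z$. At that level everything becomes an explicit computation with mapping cones of complexes. There $\omega_Z^\flat$ is visibly the cone of $h_1^\flat - h_2^\flat$ over $\omega^\flat$, and the diagram commutes by construction.
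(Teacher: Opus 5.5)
Your proposal is correct and follows the same route as the paper. In both, the $(n-1)$-shifted closed $2$-form is the loop obtained by concatenating $p_1^*h_1$ with the inverse of $p_2^*h_2$, and non-degeneracy comes from the PTVV comparison of the tangent fibre sequence with the shifted cotangent one via a five-lemma argument; the paper simply cites this as Theorem 2.9 of PTVV, while you unpack it. One small slip: rotating $\LL_Z[n-1]\to p_1^*\LL_{L_1/X}[n-1]\oplus p_2^*\LL_{L_2/X}[n-1]\to g^*\LL_X[n]$ backwards shows that the term mapping into $\LL_Z[n-1]$ is $g^*\LL_X[n-1]$, not $g^*\LL_X[n-2]$.
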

\begin{proof}
By \cite[Theorem 2.9]{PTVV}, the homotopy pullback of two Lagrangian morphisms into an $n$-shifted symplectic stack has an $(n-1)$-shifted symplectic structure. The $(n-1)$-shifted closed 2-form is defined by evaluating the loop formed by the concatenation of the isotropic null-homotopies $h_1$ and $h_2$ at the homotopy pullback.

\end{proof}

\begin{figure}[htbp]
    \centering
    \begin{tikzpicture}[x=1.2cm,y=0.7cm]
        \node (Z) at (0,2) {$L_{1} \times_{X}^{h} L_{2}$};
        \node (L1) at (-2,0) {$L_{1}$};
        \node (L2) at (2,0) {$L_{2}$};
        \node (X) at (0,-2) {$X$};
        
        \draw[->, thick] (Z) -- (L1) node[midway, above left] {$p_{1}$};
        \draw[->, thick] (Z) -- (L2) node[midway, above right] {$p_{2}$};
        \draw[->, thick] (L1) -- (X) node[midway, below left] {$f_{1}$};
        \draw[->, thick] (L2) -- (X) node[midway, below right] {$f_{2}$};
    \end{tikzpicture}
    \caption{The homotopy pullback $L_{1} \times_{X}^{h} L_{2}$ of two Lagrangian morphisms into an $n$-shifted symplectic stack $X$. The concatenation of the isotropic homotopies $h_{1}$ and $h_{2}$ along the pullback square induces the $(n-1)$-shifted symplectic structure on the derived intersection.}
\end{figure}

\section{The AKSZ Construction and Lagrangian Spans}\label{sec:aksz}

We recall the definition of the Betti stack and the transgression morphism. By \cite[Section 1.3]{CHS}, the AKSZ construction defines an extended topological field theory from topological spaces to derived stacks. 

Let $Y$ be a compact oriented topological manifold of dimension $d$. The \bfem{Betti stack} $Y_{B}$ is the locally constant derived stack taking constant value the $\infty$-groupoid associated to the homotopy type of $Y$. The Betti stack $Y_B$ is an $\cO$-compact derived stack \cite[Section 2.1]{PTVV}. For any derived Artin stack $X$, the \bfem{derived mapping stack} $Map(Y, X)$ is the derived space of morphisms from $Y_{B}$ into $X$. 

The AKSZ construction uses a \textit{transgression morphism} to evaluate differential forms on the mapping stack. Let $M$ denote the derived mapping stack $Map(Y, X)$. There is a canonical evaluation morphism $ev \colon Y_{B} \times M \to X$. The derived pullback along the evaluation morphism maps closed 2-forms of degree $n$ on $X$ to closed 2-forms of degree $n$ on the product stack $Y_{B} \times M$.

A topological orientation of the compact manifold $Y$ determines an $\cO$-orientation of dimension $d$ on the $\cO$-compact stack $Y_{B}$ \cite[Section 2.1]{PTVV}. This $\cO$-orientation induces a quasi-coherent integration morphism along the fibers of the projection $\pi \colon Y_{B} \times M \to M$. This integration gives a pushforward morphism on the spaces of closed 2-forms \cite[Definition 2.3]{PTVV}
\[
    \int_{[Y]} \colon \mathcal{A}^{2, \mathrm{cl}}(Y_{B} \times M, n) \to \mathcal{A}^{2, \mathrm{cl}}(M, n-d).
\]

The \bfem{transgression morphism} $\tau$ is defined as the composition of the derived pullback and the integration morphism
\[
    \tau \colon \mathcal{A}^{2, \mathrm{cl}}(X, n) \xrightarrow{ev^*} \mathcal{A}^{2, \mathrm{cl}}(Y_{B} \times M, n) \xrightarrow{\int_{[Y]}} \mathcal{A}^{2, \mathrm{cl}}(M, n-d),
\]leading to the following result:

\begin{theorem}\label{ptvv_a}
Let $X$ be an $n$-shifted symplectic derived stack. The derived mapping stack $Map(Y, X)$ has an $(n-d)$-shifted symplectic structure.
\end{theorem}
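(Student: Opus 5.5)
The plan is to follow the proof of \cite[Theorem 2.5]{PTVV}. The candidate symplectic structure on $M = Map(Y, X)$ is the transgression $\tau(\omega) = \int_{[Y]} ev^{*}\omega$. Since $\tau$ is built from maps of spaces of closed 2-forms, $\tau(\omega)$ is automatically a closed 2-form of degree $n-d$. The only thing left to prove is non-degeneracy of its underlying 2-form, which we check pointwise on $M$.

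First we identify the tangent complex. At a $\K$-point $g \colon Y_B \to X$ of $M$, the tangent complex is
\[
\TT_{M,g} \simeq \Gamma(Y_B, g^{*}\TT_X) = C^{*}(Y, g^{*}\TT_X).
\]
This uses the universal property of the mapping stack together with base change for $\pi$. Base change holds because $Y_B$ is $\cO$-compact, so $\pi_{*}$ commutes with the relevant pullbacks. The same argument for $S$-points $Spec\,A \to M$ gives the family version: $\TT_M \simeq \pi_{*} ev^{*}\TT_X$ as perfect complexes on $M$. In particular $\TT_M$ is dualizable, so non-degeneracy makes sense.

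Next we compute the underlying 2-form of $\tau(\omega)$. It is the pairing
\[
\pi_{*}ev^{*}\TT_X \otimes \pi_{*}ev^{*}\TT_X \to \pi_{*}\big(ev^{*}\TT_X\otimes ev^{*}\TT_X\big) \xrightarrow{\omega} \pi_{*}\cO[n] \xrightarrow{\int_{[Y]}} \cO_M[n-d].
\]
The first map is the cup product, and the second is the pointwise form $\omega$. The adjoint of this pairing factors as
\[
\pi_{*}ev^{*}\TT_X \xrightarrow{\omega^{\flat}} \pi_{*}ev^{*}\LL_X[n] \simeq \pi_{*}\big((ev^{*}\TT_X)^{\vee}\big)[n] \to \big(\pi_{*}ev^{*}\TT_X\big)^{\vee}[n-d].
\]
The first arrow is an equivalence because $\omega$ is non-degenerate. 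The last arrow is an equivalence by the definition of an $\cO$-orientation: capping with $[Y]$ induces a Poincaré–Verdier duality $\pi_{*}(E^{\vee}) \simeq (\pi_{*}E)^{\vee}[-d]$ for perfect $E$ on $Y_B \times M$. This reduces to classical Poincaré duality with local coefficients for the compact oriented manifold $Y$.

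The main obstacle is verifying that the underlying 2-form of the transgressed closed form really is the pairing above. One has to track how $ev^{*}$ acts on the weight-2 graded piece of $\mathbf{DR}$, including the identification $ev^{*}\LL_X \to \LL_{Y_B\times M} \simeq \pi^{*}\LL_M$, which holds since $\LL_{Y_B}$ vanishes for the locally constant stack. One also has to check that $\int_{[Y]}$ is compatible with taking underlying forms. I would handle both points by working with graded mixed complexes and noting that the mixed structure plays no role in the weight-2 projection. With these identifications the composite is an equivalence, so $\tau(\omega)$ is an $(n-d)$-shifted symplectic structure, as \cite[Theorem 2.5]{PTVV} asserts.
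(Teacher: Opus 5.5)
Your proposal is correct and takes the same route as the paper. The paper's proof just invokes \cite[Theorem 2.5]{PTVV}: the form is the transgression $\tau(\omega)=\int_{[Y]}ev^{*}\omega$, and non-degeneracy follows from that of $\omega$ together with the Poincar\'e duality of the $\cO$-orientation on $Y_B$; you make explicit what the citation leaves implicit, namely $\TT_{M}\simeq\pi_{*}ev^{*}\TT_X$ and the factorization of the adjoint of $\tau(\omega)$ through $\omega^{\flat}$ and the orientation duality.
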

\begin{proof}
This theorem is established in \cite[Theorem 2.5]{PTVV}. The transgression morphism $\tau$ maps the $n$-shifted symplectic form $\omega$ on $X$ to an $(n-d)$-shifted closed 2-form $\tau(\omega)$ on the mapping stack. The non-degeneracy of the transgressed form follows from the non-degeneracy of $\omega$ and the Poincaré duality induced by the $\cO$-orientation on $Y_B$.
\end{proof}

An oriented cobordism $W$ between two manifolds $Y_{1}$ and $Y_{2}$ defines a Lagrangian correspondence. The restriction maps to the boundary components define a span of derived mapping stacks
\[
    Map(Y_{1}, X) \leftarrow Map(W, X) \rightarrow Map(Y_{2}, X).
\]
This defines the \bfem{$(\infty, 2)$-category of Lagrangian spans} $Lag_{n}$. The objects of $Lag_{n}$ are $n$-shifted symplectic derived stacks, the 1-morphisms are Lagrangian correspondences, and the 2-morphisms are Lagrangian spans between the correspondences.

\section{Derived Symplectification and Legendrian Morphisms}\label{sec:symplectification}

We define the algebraic structures of derived contact geometry using equivariant descent and the natural symplectic-contact dictionary. 

Let $\cL$ be a line bundle on a derived Artin stack $X$. The \bfem{associated principal $\Gm$-bundle} is defined by the relative spectrum $\widetilde{X} \simeq \mathbf{Spec}_{X}(\bigoplus_{k \in \mathbb{Z}} \cL^{\otimes k})$. The projection morphism $p \colon \widetilde{X} \to X$ has a $\Gm$-action on the fibers. 

The $\Gm$-action on a derived stack induces an auxiliary fiber grading on the derived de Rham complex. Following \cite[Section 2.1.1]{Calaque}, quasi-coherent sheaves on the classifying stack $B\Gm$ are identified with graded complexes. A $\Gm$-action on $\widetilde{X}$ defines a descent to the quotient $[\widetilde{X}/\Gm]$, enhancing the derived de Rham complex to a graded mixed complex. A closed $p$-form $\omega$ on $\widetilde{X}$ has \bfem{weight $w$} if it defines a point in the weight $w$ graded mapping space
\[
    \mathcal{A}^{p, \mathrm{cl}, \{w\}}(\widetilde{X}, n) \simeq \mathrm{Map}_{\epsilon-\mathbf{dg}_{\K}^{gr}}(\K, \mathbf{DR}(\widetilde{X})[n+p](p)\{w\})
\]
where $\epsilon-\mathbf{dg}_{\K}^{gr}$ denotes the \bfem{$\infty$-category of graded mixed $\K$-complexes}, $\K$ denotes the trivial graded mixed complex, and $\{w\}$ denotes the $w$-th shift in the fiber grading.

An \bfem{$n$-shifted contact structure} on $X$ consists of the line bundle $\cL$ and an $n$-shifted symplectic structure $\omega_{\widetilde{X}}$ on $\widetilde{X}$ such that $\omega_{\widetilde{X}}$ has weight 1. The fundamental vector field of the weight 1 $\Gm$-action is the derived analogue of the classical Liouville vector field. The stack quotient of the symplectification recovers the contact structure. In fact, we have:

\begin{theorem}[\cite{IzbudakBerktav}]
Let $\widetilde{X}$ be an $n$-shifted symplectic derived stack equipped with a weight 1 $\Gm$-action. The stack quotient $X \simeq [\widetilde{X} / \Gm]$ admits an $n$-shifted contact structure defined by descent along the principal $\Gm$-bundle projection $p \colon \widetilde{X} \to X$.
\end{theorem}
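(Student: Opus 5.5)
We prove the statement by $\Gm$-equivariant descent along $q\colon \widetilde{X}\to[\widetilde{X}/\Gm]$. The plan is to set $X:=[\widetilde{X}/\Gm]$ and take the contact line bundle $\cL$ on $X$ to be the one associated to the $\Gm$-torsor $q$. This is the line bundle corresponding to the weight $1$ character $\Gm\to\Gm$. The first step is to identify $\widetilde{X}$ with $\mathbf{Spec}_{X}(\bigoplus_{k\in\mathbb{Z}}\cL^{\otimes k})$. We work smooth-locally on $X$: there $q$ is a trivial torsor $U\times\Gm\to U$, and $\cO(\Gm)=\bigoplus_k \K\,t^k$. Hence $q_*\cO_{\widetilde{X}}$ decomposes under the $\Gm$-action into weight spaces. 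By definition of $\cL$, the weight $k$ piece is $\cL^{\otimes k}$, and the decomposition is compatible with multiplication. This gives the equivalence $\widetilde{X}\simeq\mathbf{Spec}_X(\bigoplus_k\cL^{\otimes k})$ over $X$, under which $p=q$.

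The second step is to produce the weight $1$ closed form. We regard $\omega_{\widetilde{X}}$ as a point of $\mathcal{A}^{2,\mathrm{cl}}(\widetilde{X},n)$. The hypothesis that the $\Gm$-action has weight $1$ with respect to $\omega$ means, in the language recalled from \cite[Section 2.1.1]{Calaque}, the following. The graded mixed complex $\mathbf{DR}(\widetilde{X})$, enhanced via $B\Gm$-descent, receives $\omega$ as a map $\K\to\mathbf{DR}(\widetilde{X})[n+2](2)\{1\}$. Concretely, $\mathbf{DR}([\widetilde{X}/\Gm])$ with coefficients in the graded object $q_*\cO$ splits by weight, and $\omega$ lies in the weight $1$ summand. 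Thus $\omega$ defines a point of $\mathcal{A}^{2,\mathrm{cl},\{1\}}(\widetilde{X},n)$.

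Non-degeneracy, $\TT_{\widetilde{X}}\simeq\LL_{\widetilde{X}}[n]$, is part of the hypothesis and needs no descent. The pair $(\cL,\omega_{\widetilde{X}})$ therefore satisfies the definition of an $n$-shifted contact structure on $X$ given above.

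The main obstacle is making the weight statement precise: we must show that the structure carried by $\omega$ (a $\Gm$-equivariant closed form with twist by the weight $1$ character) is the same as a point in the $\{1\}$-graded mapping space. I would handle this by writing the twisted equivariant de Rham complex as a Čech/bar construction for the $\Gm$-action. The weight $1$ twist then corresponds to tensoring with the character $\cO_{B\Gm}\{1\}$, so taking $\Gm$-homotopy invariants of the twisted complex gives exactly the weight $1$ part of the graded mixed complex. The remaining compatibilities, namely closedness and the mixed structure, are then automatic, because pulling back along the atlas $\widetilde{X}\to X$ is conservative on these graded pieces.
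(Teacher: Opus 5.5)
Your proposal is correct and follows the paper's framework. The paper itself gives no proof here, only the citation to \cite{IzbudakBerktav}. Since an $n$-shifted contact structure is \emph{defined} in Section~4 as a line bundle $\cL$ together with a weight $1$ symplectic form on $\mathbf{Spec}_X(\bigoplus_k\cL^{\otimes k})$, identifying $\widetilde{X}\to[\widetilde{X}/\Gm]$ $\Gm$-equivariantly with that relative spectrum and reading the weight off the hypothesis is all that is needed, which is what you do.

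One precision concerns your ``concretely'' sentence and your bar-construction remark. The weight-split complex there must be the de Rham complex of $[\widetilde{X}/\Gm]$ relative to $B\Gm$, built from $\LL_{[\widetilde{X}/\Gm]/B\Gm}$, which pulls back to $\LL_{\widetilde{X}}$. It is not $\wedge^\bullet\LL_X\otimes q_*\cO$, because $\omega$ is not horizontal: its contraction with the Euler vector field is the contact form. Nothing in your argument depends on this gloss, however, since the hypothesis already places $\omega$ in $\mathcal{A}^{2,\mathrm{cl},\{1\}}(\widetilde{X},n)$.
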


Let $f \colon L \to X$ be a morphism of derived Artin stacks, with contact target, and $p \colon \widetilde{X} \to X$ the principal $\Gm$-bundle projection. The homotopy pullback of $p$ along $f$ gives a principal $\Gm$-bundle $\widetilde{L} \to L$ and a $\Gm$-equivariant lift $\widetilde{f} \colon \widetilde{L} \to \widetilde{X}$. By an \bfem{(contact) isotropic structure} on $f$, we mean a $\Gm$-equivariant isotropic structure on the lift $\widetilde{f}$ with respect to $\omega_{\widetilde{X}}$.
Such morphism $f \colon L \to X$ is \bfem{Legendrian} if the associated $\Gm$-equivariant lift $\widetilde{f} \colon \widetilde{L} \to \widetilde{X}$ is a Lagrangian morphism. This is the derived analogue of maximally isotropic submanifolds.

\begin{figure}[htbp]
    \centering
    \begin{tikzpicture}[scale=.9]
        \draw[thick] (0,0) ellipse (2.5 and 0.8);
        \node at (2.5, -0.5) {$X$};

        \draw[thick] (-1.5, -0.2) to[out=30, in=150] (1.5, 0.1);
        \node at (1.7, 0.2) {$L$};

        \draw[->, thick] (-3, 1.5) -- (-3, 0.5) node[midway, left] {$p$};

        \draw[dashed, gray] (-2.5, 0) -- (-2.5, 2.5);
        \draw[dashed, gray] (2.5, 0) -- (2.5, 2.5);
        \draw[gray] (0, 2.5) ellipse (2.5 and 0.8);
        \node at (3, 2) {$\widetilde{X}$};

        \filldraw[fill=red!20, draw=red, thick, opacity=0.8] 
            (-1.5, -0.2) to[out=30, in=150] (1.5, 0.1) -- 
            (1.5, 2.6) to[in=30, out=150] (-1.5, 2.3) -- cycle;
        \node at (0, 1.5) {$\widetilde{L}$};

        \draw[->, thick] (0.5, 0.425) -- (0.5, 1.5) node[midway, right] {$\Gm$};
        \draw[->, thick] (-0.5, 0.28) -- (-0.5, 1.3);
    \end{tikzpicture}
    \caption{The Lagrangian lift $\widetilde{L}$ inside the derived symplectification $\widetilde{X}$ over the Legendrian $L$ in the contact base $X$. The fibers are generated by the weight 1 $\Gm$-action modeling the Liouville vector field.}
\end{figure}

\section{Morphism Spaces via Derived Legendrian Intersections}\label{sec:morphisms}

We define the \bfem{objects of $\mathcal{F}_{c}(X)$} to be Legendrian morphisms $f \colon L \to X$. Let $f_1 \colon L_1 \to X$ and $f_2 \colon L_2 \to X$ be objects in $\mathcal{F}_{c}(X)$. The homotopy pullback of the projection $p$ along $f_i$ gives principal $\Gm$-bundles $q_i \colon \widetilde{L}_i \to L_i$. The homotopy pullback gives $\Gm$-equivariant morphisms $\widetilde{f}_i \colon \widetilde{L}_i \to \widetilde{X}$. By the derived Legendrian intersection theorem, the lifts $\widetilde{f}_i$ intersect to form a contact structure on the base.

\begin{theorem}[\cite{IzbudakBerktav}]\label{prev_main}
Let $X$ be an $n$-shifted contact derived stack. Let $f_1 \colon L_1 \to X$ and $f_2 \colon L_2 \to X$ be Legendrian morphisms. The $\Gm$-equivariant Lagrangian lifts $\widetilde{f}_1 \colon \widetilde{L}_1 \to \widetilde{X}$ and $\widetilde{f}_2 \colon \widetilde{L}_2 \to \widetilde{X}$ define a $\Gm$-equivariant $(n-1)$-shifted symplectic structure of weight 1 on the homotopy pullback $\widetilde{L}_1 \times_{\widetilde{X}}^h \widetilde{L}_2$. The descent of this equivariant structure along the principal $\Gm$-bundle defines an $(n-1)$-shifted contact structure on the homotopy pullback $L_1 \times_{X}^h L_2$.
\end{theorem}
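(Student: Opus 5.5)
The plan is to prove the statement in three stages: first on the symplectification, then by passing to weights, and finally by descent along $p$.

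\emph{Step 1: the symplectic structure on $\widetilde{Z}$.} Set $\widetilde{Z} := \widetilde{L}_1 \times_{\widetilde{X}}^h \widetilde{L}_2$. Since $f_1$ and $f_2$ are Legendrian, the lifts $\widetilde{f}_1$ and $\widetilde{f}_2$ are Lagrangian morphisms into the $n$-shifted symplectic stack $(\widetilde{X}, \omega_{\widetilde{X}})$. The Proposition of Section~\ref{sec:foundations} (i.e.\ \cite[Theorem 2.9]{PTVV}) then gives an $(n-1)$-shifted symplectic structure $\omega_{\widetilde{Z}}$ on $\widetilde{Z}$. This form is obtained from the loop $h_1 \ast h_2^{-1}$, which goes from $0$ to $0$ in $\mathcal{A}^{2,\mathrm{cl}}(\widetilde{Z}, n)$ through the two pullbacks of $\omega_{\widetilde{X}}$. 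Such a loop is a point of $\Omega\mathcal{A}^{2,\mathrm{cl}}(\widetilde{Z},n) \simeq \mathcal{A}^{2,\mathrm{cl}}(\widetilde{Z},n-1)$.

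\emph{Step 2: equivariance and weight.} Since $\widetilde{f}_i$ and $h_i$ are $\Gm$-equivariant, the whole pullback diagram lives in $\Gm$-stacks, and $\widetilde{Z}$ carries the diagonal $\Gm$-action. The key observation is that the weight-$w$ forms $\mathcal{A}^{2,\mathrm{cl},\{w\}}(-,n)$ form a functor on $\Gm$-equivariant stacks, defined by the mapping space into $\mathbf{DR}(-)[n+2](2)\{1\}$ in graded mixed complexes. Therefore:
\begin{itemize}
\item $\omega_{\widetilde{X}}$ is a point of weight $1$;
\item $h_1$ and $h_2$ are paths in the weight-$1$ space;
\item the concatenated loop lies in $\Omega\mathcal{A}^{2,\mathrm{cl},\{1\}}(\widetilde{Z},n) \simeq \mathcal{A}^{2,\mathrm{cl},\{1\}}(\widetilde{Z},n-1)$.
\end{itemize}
The loop-space shift commutes with the fiber grading $\{1\}$ because both are exact operations on graded mixed complexes. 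Non-degeneracy does not depend on the grading, so Step 1 already supplies it. This gives a weight-$1$ $\Gm$-equivariant $(n-1)$-shifted symplectic structure on $\widetilde{Z}$.

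\emph{Step 3: descent to the base.} It remains to identify $\widetilde{Z}$ as the principal $\Gm$-bundle over $Z = L_1 \times_X^h L_2$ associated with the pulled-back line bundle. Using $\widetilde{L}_i \simeq L_i \times^h_X \widetilde{X}$ and pasting pullback squares, we get
\[
\widetilde{L}_1 \times^h_{\widetilde{X}} \widetilde{L}_2 \simeq L_1 \times^h_X \widetilde{X} \times^h_X L_2 \simeq Z \times^h_X \widetilde{X},
\]
equivariantly, with $\Gm$ acting through the factor $\widetilde{X}$. Hence $[\widetilde{Z}/\Gm] \simeq Z$, and $\widetilde{Z}$ is the $\Gm$-bundle of the line bundle $\cL|_Z$. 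The cited theorem from \cite{IzbudakBerktav} on quotients of weight-$1$ symplectic $\Gm$-stacks then yields the $(n-1)$-shifted contact structure on $Z$. Equivalently, this is immediate from the definition of a contact structure as the pair $(\cL|_Z, \omega_{\widetilde{Z}})$.

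\emph{Main obstacle.} The hard part is Step 2. One has to check that the Lagrangian-intersection construction of \cite{PTVV} can be carried out inside the weight-graded mixed complexes, so that the equivariant isotropic homotopies produce a weight-$1$ form and not just a form whose underlying class happens to be invariant. I would handle this by working throughout on the quotient stacks $[\widetilde{L}_i/\Gm] \to [\widetilde{X}/\Gm]$. Over $B\Gm$, graded complexes are quasi-coherent sheaves, so the weight-$1$ forms become twisted forms with coefficients in $\cL$. Rerunning the PTVV argument with these twisted coefficients gives the required weight-$1$ statement.
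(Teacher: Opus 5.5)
Your proposal is correct and follows the same route the paper relies on. The paper cites \cite{IzbudakBerktav} rather than reproving the statement, but its use of it in Section~\ref{sec:proof}, together with the weight-preservation argument in the proof of Theorem~\ref{thm:span}, amounts to your three steps: the PTVV Lagrangian intersection on $\widetilde{X}$, weight~$1$ preserved under $\Gm$-equivariant pullback and concatenation of the equivariant null-homotopies, and descent along $\widetilde{Z} \to [\widetilde{Z}/\Gm]$. Your explicit identification $\widetilde{Z} \simeq Z \times^h_X \widetilde{X}$, which shows that the quotient really is $L_1 \times^h_X L_2$ with line bundle $\cL|_Z$, fills in a step the paper leaves implicit. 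The twisted-coefficient fallback in your last paragraph is not needed, since the functoriality argument of Step~2 already settles the weight question.
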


The derived intersection of these equivariant Lagrangian structures is computed via a homotopy pullback square and a $\Gm$-equivariant descent diagram.

\begin{figure}[htbp]
    \centering
\begin{tikzpicture}[scale=0.8]
    \node[fill=white, inner sep=2pt] (tZ) at (0,0) {$\widetilde{L}_1 \times^h_{\widetilde{X}} \widetilde{L}_2$};
    \node[fill=white, inner sep=2pt] (tL1) at (-2.5, 1.5) {$\widetilde{L}_1$};
    \node[fill=white, inner sep=2pt] (tL2) at (2.5, 1.5) {$\widetilde{L}_2$};
    \node[fill=white, inner sep=2pt] (tX) at (0, 3) {$\widetilde{X}$};

    \node[fill=white, inner sep=2pt] (Z) at (2, -3) {$L_1 \times^h_X L_2$};
    \node[fill=white, inner sep=2pt] (L1) at (-0.5, -1.5) {$L_1$};
    \node[fill=white, inner sep=2pt] (L2) at (4.5, -1.5) {$L_2$};
    \node[fill=white, inner sep=2pt] (X) at (2, 0) {$X$};

    \draw[->, thick, dashed, preaction={draw, white, line width=4pt}] (tX) -- node[pos=0.3, left] {$/\Gm$} (X);
    \draw[->, thick] (tZ) -- (tL1);
    \draw[->, thick] (tZ) -- (tL2);
    \draw[->, thick] (tL1) -- (tX);
    \draw[->, thick] (tL2) -- (tX);

    \draw[->, thick, dashed] (tL1) -- (L1);
    \draw[->, thick, dashed] (tL2) -- (L2);
    
    \draw[->, thick, preaction={draw, white, line width=4pt}] (Z) -- (L1);
    \draw[->, thick, preaction={draw, white, line width=4pt}] (Z) -- (L2);
    \draw[->, thick, preaction={draw, white, line width=4pt}] (L1) -- (X);
    \draw[->, thick, preaction={draw, white, line width=4pt}] (L2) -- (X);
    \draw[->, thick, dashed] (tZ) -- node[pos=0.7, left] {$p$} (Z);
\end{tikzpicture}
    \caption{The homotopy limit diagram. The upper plane is the derived intersection of the $\Gm$-equivariant Lagrangian lifts in the derived symplectification $\widetilde{X}$. The dashed arrows denote the descent via the principal $\Gm$-bundle quotient, giving the derived Legendrian intersection in the contact base $X$.}
\end{figure}

We define the \bfem{1-morphism object $\mathcal{H}om_{\mathcal{F}_{c}(X)}(L_{1}, L_{2})$} as the derived stack $Z \simeq L_{1} \times_{X}^{h} L_{2}$. Hence, the morphism space in the derived Legendrian category is a contact derived stack.

\section{Higher Categories of Lagrangian Spans}\label{sec:higher_categories}

We define the \bfem{composition operations} by defining the derived Legendrian category using iterated spans. Let $L_{0}, L_{1}, L_{2}$ be objects in $\mathcal{F}_{c}(X)$ with their corresponding $\Gm$-equivariant Lagrangian lifts $\widetilde{L}_{0}, \widetilde{L}_{1}, \widetilde{L}_{2}$ in $\widetilde{X}$. By \cite[Section 2]{CHS}, the composition of morphisms in the derived setting is computed by taking the span of homotopy pullbacks.

We consider the triple homotopy pullback of Lagrangian lifts $\widetilde{Z}_{012} \simeq \widetilde{L}_{0} \times_{\widetilde{X}}^{h} \widetilde{L}_{1} \times_{\widetilde{X}}^{h} \widetilde{L}_{2}$. The projection maps from the triple intersection to the pairwise intersections form a span in the $\infty$-category of derived stacks.

\begin{figure}[htbp]
    \centering
    \begin{tikzpicture}[scale=1]
        \node (Z012) at (0,2) {$\widetilde{L}_{0} \times_{\widetilde{X}}^{h} \widetilde{L}_{1} \times_{\widetilde{X}}^{h} \widetilde{L}_{2}$};
        \node (Z01) at (-2.5,0) {$\widetilde{L}_{0} \times_{\widetilde{X}}^{h} \widetilde{L}_{1}$};
        \node (Z12) at (0,0) {$\widetilde{L}_{1} \times_{\widetilde{X}}^{h} \widetilde{L}_{2}$};
        \node (Z02) at (2.5,0) {$\widetilde{L}_{0} \times_{\widetilde{X}}^{h} \widetilde{L}_{2}$};
        
        \draw[->, thick] (Z012) -- (Z01) node[midway, above left] {$\pi_{01}$};
        \draw[->, thick] (Z012) -- (Z12) node[midway, right] {$\pi_{12}$};
        \draw[->, thick] (Z012) -- (Z02) node[midway, above right] {$\pi_{02}$};
    \end{tikzpicture}
    \caption{The derived span defining the composition of morphisms. The triple intersection projects to the pairwise derived intersections, defining the composition map $\mathcal{H}om(L_{1}, L_{2}) \times \mathcal{H}om(L_{0}, L_{1}) \to \mathcal{H}om(L_{0}, L_{2})$.}
\end{figure}

\begin{theorem}\label{thm:span}
The composition operations induced by the Lagrangian spans are well-defined on the contact quotients and satisfy the coherence conditions, making $\mathcal{F}_{c}(X)$ a subcategory of the $(\infty, 2)$-category of spans.
\end{theorem}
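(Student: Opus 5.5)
We reduce everything to the $\Gm$-equivariant setting on the derived symplectification $\widetilde{X}$, carry out the span construction there using the Calaque--Haugseng--Scheimbauer machinery, and then descend along the principal $\Gm$-bundle projection. Concretely, we work in the $\infty$-category of derived stacks with $\Gm$-action, equivalently derived stacks over $B\Gm$. Here the quotient $[\widetilde{X}/\Gm]\simeq X$, and for each Legendrian $f_i\colon L_i\to X$ the lift $\widetilde{f}_i$ is a $\Gm$-equivariant Lagrangian, so $[\widetilde{L}_i/\Gm]\simeq L_i$.

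\emph{Step 1: homotopy pullbacks commute with the quotient.} Since $\Gm$ acts freely (the maps $\widetilde{L}_i\to L_i$ and $\widetilde{X}\to X$ are principal bundles pulled back from $p$), taking the quotient by $\Gm$ is base change along $B\Gm$-stacks. It therefore commutes with homotopy limits of diagrams of $\Gm$-equivariant stacks taken over $\widetilde{X}$. This gives equivalences
\[
[\widetilde{L}_0\times^h_{\widetilde{X}}\widetilde{L}_1\times^h_{\widetilde{X}}\widetilde{L}_2/\Gm]\simeq L_0\times^h_X L_1\times^h_X L_2,
\]
and similarly for the pairwise intersections. The projections $\pi_{01},\pi_{12},\pi_{02}$ are equivariant, so the composition span of the previous figure descends to a span of the contact quotients, and its legs are the projections of Theorem \ref{prev_main}.

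\emph{Step 2: structure upstairs.} By \cite[Section 2]{CHS}, iterated Lagrangian intersections assemble into a (complete) $n$-fold Segal object in spans. The Segal conditions hold because iterated fiber products over $\widetilde{X}$ decompose, e.g. $\widetilde{Z}_{012}\simeq \widetilde{Z}_{01}\times^h_{\widetilde{L}_1}\widetilde{Z}_{12}$. The symplectic and Lagrangian data on each leg are then given by the proposition of Section~\ref{sec:foundations} together with the concatenation of isotropic homotopies. We must verify that all of this is $\Gm$-equivariant and preserves weight~1. Since each $\omega_{\widetilde{X}}$ lives in $\mathcal{A}^{2,\mathrm{cl},\{1\}}$ and the isotropic homotopies $h_i$ are chosen equivariantly, the concatenated loops live in the weight-1 graded mapping space. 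Naturality of the graded mixed de Rham complex under equivariant pullback then shows that the CHS construction, run internally to graded mixed complexes, produces a $\Gm$-equivariant Segal object of weight-1 Lagrangian spans.

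\emph{Step 3: coherence and descent.} Associativity and the higher coherences follow from the Segal property upstairs. Because Step 1 shows the quotient functor preserves the relevant limits, it preserves Segal objects, so the descended simplicial object is again Segal. Identities are given by the diagonal $L\to L\times^h_X L$, which is the quotient of the equivariant diagonal. Forgetting the contact data yields a functor into the $(\infty,2)$-category of spans of derived stacks. Its image consists of the $L_i$ together with the spans built from their intersections, which exhibits $\mathcal{F}_c(X)$ as a subcategory.

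\emph{Main obstacle.} The hardest step is making the weight-1 condition compatible with the CHS coherences: CHS build $Lag_n$ using closed forms without any grading. I would first try to redo their construction with the weight-graded spaces $\mathcal{A}^{2,\mathrm{cl},\{1\}}(-,n)$ in place of $\mathcal{A}^{2,\mathrm{cl}}(-,n)$, viewing this as a change of coefficient functor. The key input is that the weight-graded functor still sends colimits of stacks to limits, which is needed for the Segal conditions. If that works, the equivariant Lagrangian spans form a subcategory of $Lag_n$ internal to $B\Gm$-stacks, and Step 1 then finishes the argument.
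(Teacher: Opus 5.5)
Your proposal takes essentially the same route as the paper. Both compute the pairwise and triple intersections $\Gm$-equivariantly in $\widetilde{X}$, use that the projections $\pi_{ij}$ are equivariant so that pullback preserves the weight-1 component of the graded mixed de Rham complex (the paper cites Calaque, \S2.1.1, for this), appeal to CHS for the coherences, and descend along the principal $\Gm$-bundles. Your Step~1 (the quotient commutes with these fiber products because everything is pulled back from $p$), the diagonal identities, and your flagged obstacle (re-running CHS with weight-graded forms) spell out points the paper asserts in a line or omits, rather than giving a different argument.
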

\begin{proof}
By \cite[Section 2]{CHS}, shifted symplectic stacks and Lagrangian correspondences form a symmetric monoidal $(\infty, 2)$-category $Lag_{n}$. The pairwise derived intersections $\widetilde{Z}_{ij}$ and the triple derived intersection $\widetilde{Z}_{012}$ are computed via homotopy pullbacks. The stacks $\widetilde{L}_i$ and $\widetilde{X}$ have $\Gm$-actions, and the morphisms between them are $\Gm$-equivariant. Thus, these homotopy limits are computed in the $\infty$-category of $\Gm$-equivariant derived stacks. The projection maps $\pi_{ij} \colon \widetilde{Z}_{012} \to \widetilde{Z}_{ij}$ are the structural morphisms of the homotopy limit cone. Therefore, the projection maps are $\Gm$-equivariant.

By \cite[Section 2.1.1]{Calaque}, a $\Gm$-equivariant morphism between derived stacks induces a $\Gm$-equivariant enhancement on the derived de Rham complexes, preserving the fiber grading over $B\Gm$. Since the projection maps $\pi_{ij}$ are $\Gm$-equivariant, the derived pullback maps $\pi_{ij}^*$ preserve the weight decomposition. Any weight 1 closed 2-form on $\widetilde{Z}_{ij}$ pulls back to a weight 1 closed 2-form on $\widetilde{Z}_{012}$.

The descent along the principal $\Gm$-bundles translates the equivariant span diagram to the contact quotients. The composition sequence thus defines an $(\infty, 2)$-category in $\mathcal{F}_{c}(X)$.

\end{proof}

\section{Topological Cobordisms and Derived Legendrian Surgery}\label{sec:cobordisms}

We define the \textit{Legendrian connected sum} and \textit{surgery} in derived algebraic geometry using the AKSZ construction. 
Consider an oriented topological cobordism $W$ of dimension $d$ with boundary $\partial W \simeq \overline{Y}_{1} \amalg Y_{2}$, which models a topological surgery between $Y_1$ and $Y_2$.

\begin{figure}[htbp]
    \centering
    \begin{tikzpicture}[x=2.5cm, y=1.5cm]
        \node (W) at (0, 2) {$W$};
        \node (Y1) at (-1, 1) {$Y_1$};
        \node (dummy) at (0, 1) {};
        \node (Y2) at (1, 1) {$Y_2$};
        \node (MapW) at (0, 0) {$Map(W, \widetilde{X})$};
        \node (MapY1) at (-1, -1) {$Map(Y_{1}, \widetilde{X})$};
        \node (MapY2) at (1, -1) {$Map(Y_{2}, \widetilde{X})$};

        \draw[->] (Y1) -- node[above left] {$i_1$} (W);
        \draw[->] (Y2) -- node[above right] {$i_2$} (W);
        \draw[->, dashed] (dummy) -- node[right] {\ AKSZ} (MapW);
        \draw[->] (MapW) -- node[above left] {$i_1^*$} (MapY1);
        \draw[->] (MapW) -- node[above right] {$i_2^*$} (MapY2);
    \end{tikzpicture}
    \caption{The topological cobordism $W$ mapped to a Lagrangian span via the AKSZ functor $\mathcal{Z} _{\widetilde{X}}$. The topological inclusions of the boundary components induce the projection morphisms of the Lagrangian span.}
\end{figure}

\begin{proposition}\label{prop:equiv_mapping_stack}
The mapping stack $Map(W, \widetilde{X})$ has a $\Gm$-equivariant Lagrangian structure over the product stack $Map(Y_{1}, \widetilde{X}) \times Map(Y_{2}, \widetilde{X})$.
\end{proposition}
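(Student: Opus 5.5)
The plan is to reduce the statement to the non-equivariant AKSZ theorem for manifolds with boundary (the relative version of Theorem \ref{ptvv_a}, due to Calaque), and then check that every ingredient of that construction is compatible with the $\Gm$-action on $\widetilde{X}$.

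\textbf{Step 1: the isotropic structure.} By Theorem \ref{ptvv_a}, each $Map(Y_i,\widetilde{X})$ carries the $(n-d+1)$-shifted symplectic form $\tau_{Y_i}(\omega_{\widetilde{X}})$. The product $Map(Y_1,\widetilde{X})\times Map(Y_2,\widetilde{X})$ is given the form $-\tau_{Y_1}(\omega)\boxplus\tau_{Y_2}(\omega)$, where the sign comes from the orientation reversal $\overline{Y}_1$.

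The restriction map $r=(i_1^*,i_2^*)$ is the map $Map(W,\widetilde{X})\to Map(\partial W,\widetilde{X})$. The pair $(W,\partial W)$ is $\cO$-compact with a relative $\cO$-orientation of dimension $d$, namely the fundamental class $[W]$ with $\partial[W]=[\partial W]$. Following \cite{Calaque}, the isotropic structure on $r$ is the null-homotopy
\[
r^*\tau_{\partial W}(\omega)\simeq \int_{[\partial W]} ev^*\omega \sim 0
\]
induced by the chain-level boundary relation, i.e.\ Stokes' formula for fiber integration over $W_B$. Non-degeneracy is a relative Poincaré duality statement. The tangent complex of $Map(W,\widetilde{X})$ at a point $g$ is $\Gamma(W_B,g^*\TT_{\widetilde{X}})$. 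The relative tangent complex of $r$ is $\Gamma(W_B,\partial W_B; g^*\TT_{\widetilde{X}})$. Using $\TT_{\widetilde{X}}\simeq \LL_{\widetilde{X}}[n]$ together with Lefschetz duality $H^\bullet(W)\simeq H_{d-\bullet}(W,\partial W)$, the map $\Theta_h$ is an equivalence. This is exactly the content of the CHS/Calaque AKSZ theorem for cobordisms, so I will cite it rather than reprove it.

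\textbf{Step 2: equivariance.} The $\Gm$-action on $\widetilde{X}$ induces actions on all three mapping stacks by postcomposition. The evaluation maps $ev\colon W_B\times Map(W,\widetilde{X})\to\widetilde{X}$ are $\Gm$-equivariant for the trivial action on $W_B$, and $r$ is equivariant. By \cite[Section 2.1.1]{Calaque}, as used in the proof of Theorem \ref{thm:span}, $ev^*$ preserves the weight grading on the graded mixed de Rham complexes. Fiber integration $\int_{[W]}$ is built from the $\cO$-orientation of $W_B$, on which $\Gm$ acts trivially, so it is linear over the fiber grading and preserves weights. Hence $\tau$ sends the weight-1 form $\omega_{\widetilde{X}}$ to weight-1 forms. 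Moreover, the null-homotopy of Step 1 is a path in the weight-1 summand $\mathcal{A}^{2,\mathrm{cl},\{1\}}$, because the Stokes homotopy is natural in the graded mixed structure.

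Since non-degeneracy is a property of the underlying non-equivariant data, it follows that the Lagrangian structure is $\Gm$-equivariant of weight 1.

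\textbf{Main obstacle.} The delicate point is Step 2's claim that the boundary null-homotopy, and not merely the transgressed forms, lives in the weight-graded mapping space. To handle this, I would construct the entire AKSZ transgression internally in graded mixed complexes over $B\Gm$. Concretely, I would work with the quotient stacks $[Map(-,\widetilde{X})/\Gm]$ and apply the relative (over $B\Gm$) version of Calaque's boundary transgression. Because $W_B$ and $\partial W_B$ carry trivial actions, the relative orientation is pulled back from the point, so base-changing along $B\Gm$ should make the construction equivariant automatically.
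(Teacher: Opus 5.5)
Your proposal is correct and follows essentially the same route as the paper. Both arguments give the Betti stacks the trivial $\Gm$-action, so the restriction and evaluation maps are equivariant; the weight-0 relative $\cO$-orientation of $(W_B,\partial W_B)$ then yields an isotropic null-homotopy in the weight-1 component, and Poincar\'e--Lefschetz duality gives non-degeneracy. Your explicit tangent-complex computation, and your suggestion to run the transgression relative to $B\Gm$, make rigorous a step the paper only asserts.
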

\begin{proof}
We assign the trivial $\Gm$-action to the Betti stack $W_B$ \cite[Section 2.1]{PTVV}. The Betti stack $W_B$ is an $\cO$-compact derived stack. The derived mapping stack functor
\[
    \mathbb{R}Map(W_B, -) \colon \mathbf{dSt} \to \mathbf{dSt}
\]
is a right adjoint and preserves limits. A $\Gm$-action on $\widetilde{X}$ is formalized as an object in the functor $\infty$-category $\mathrm{Fun}(B\Gm, \mathbf{dSt})$. The functor $\mathbb{R}Map(W_B, -)$ operates pointwise on this functor category, inducing a $\Gm$-action on $Map(W, \widetilde{X})$. The topological inclusions $i \colon \partial W \to W$ induce morphisms of Betti stacks $i_B \colon (\partial W)_B \to W_B$ in the $\infty$-category of derived stacks \cite[Section 2.1]{PTVV}. 

The functoriality yields $\Gm$-equivariant restriction maps and an evaluation morphism
\begin{align*}
    i^* &\colon Map(W, \widetilde{X}) \to Map(\partial W, \widetilde{X}) \\
    ev_W &\colon W_B \times Map(W, \widetilde{X}) \to \widetilde{X}
\end{align*}
The boundary inclusions and restriction maps commute with the evaluation morphisms in the $\Gm$-equivariant evaluation diagram
\begin{center}
    \begin{tikzpicture}[scale=1.25]
        \node (dWMapW) at (0, 2) {$(\partial W)_B \times Map(W, \widetilde{X})$};
        \node (WMapW) at (4, 2) {$W_B \times Map(W, \widetilde{X})$};
        \node (dWMapdW) at (0, 0) {$(\partial W)_B \times Map(\partial W, \widetilde{X})$};
        \node (X) at (4, 0) {$\widetilde{X}$};
        
        \draw[->, thick] (dWMapW) -- (WMapW) node[midway, above] {$i_B \times \mathrm{id}$};
        \draw[->, thick] (dWMapW) -- (dWMapdW) node[midway, left] {$\mathrm{id} \times i^*$};
        \draw[->, thick] (WMapW) -- (X) node[midway, right] {$ev_W$};
        \draw[->, thick] (dWMapdW) -- (X) node[midway, below] {$ev_{\partial W}$};
    \end{tikzpicture}
\end{center}
Applying the derived pullback and the relative $\cO$-orientation functional to this diagram generates the isotropic null-homotopy $h$.

A Lagrangian correspondence requires an isotropic null-homotopy $h$ between zero and the pullback of the boundary symplectic forms. This null-homotopy is represented by the path
\[
    h \colon 0 \sim (i^*)^* (\omega_{\partial W}).
\]
The relative $\cO$-orientation on the pair $(W_B, \partial W_B)$ defines this null-homotopy \cite[Section 2.1]{PTVV}. The relative orientation functional operates on the fiber of the restriction map
\[
    C(W_B, \cO_{W_B}) \to C(\partial W_B, \cO_{\partial W_B}).
\]
Since $W_B$ and $\partial W_B$ carry trivial $\Gm$-actions, this fiber complex has weight 0. The induced isotropic null-homotopy $h$ is a path within the weight 1 graded component of the derived de Rham complex. The Poincaré-Lefschetz duality of the relative $\cO$-orientation guarantees the non-degeneracy of this isotropic structure. This establishes the $\Gm$-equivariant Lagrangian correspondence.
\end{proof}

\begin{theorem}\label{thm:surgery}
The topological cobordism descends to a derived Legendrian correspondence representing a 1-morphism in the $(\infty, 2)$-category of Legendrian correspondences.
\end{theorem}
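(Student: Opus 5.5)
The plan is to start from Proposition~\ref{prop:equiv_mapping_stack} and descend its output along principal $\Gm$-bundles, exactly as Legendrian morphisms were defined in Section~\ref{sec:symplectification}. Fix an $n$-shifted contact stack $X \simeq [\widetilde{X}/\Gm]$ with $n$-shifted symplectic form $\omega_{\widetilde{X}}$ of weight 1. The first step is to check that the boundary stacks $Map(Y_i, \widetilde{X})$ are $\Gm$-equivariant $(n-d+1)$-shifted symplectic of weight 1. Theorem~\ref{ptvv_a} gives the symplectic structure $\tau(\omega_{\widetilde{X}})$. The $\Gm$-action is induced pointwise through $\mathrm{Fun}(B\Gm,\mathbf{dSt})$ as in the proof of Proposition~\ref{prop:equiv_mapping_stack}, with the trivial action on $(Y_i)_B$. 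For the weight, I would argue that $ev^*$ is equivariant, so it preserves weights by \cite[Section 2.1.1]{Calaque}, and that $\int_{[Y_i]}$ is built from a weight 0 orientation class. Hence $\tau$ maps $\mathcal{A}^{2,\mathrm{cl},\{1\}}$ into $\mathcal{A}^{2,\mathrm{cl},\{1\}}$. Applying the theorem of \cite{IzbudakBerktav} on stack quotients, the quotients $M_i := [Map(Y_i,\widetilde{X})/\Gm]$ are then $(n-d+1)$-shifted contact stacks, and they serve as the objects of $Leg_{n-d+1}$.

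The second step is to form the quotient $N := [Map(W,\widetilde{X})/\Gm]$ with the induced map $g\colon N \to M_1 \times M_2$. Here the target must be the correct contact product. A naive product of two contact stacks carries a $\Gm\times\Gm$-torsor, not a $\Gm$-torsor. I would therefore take as the contact target the quotient of $Map(Y_1,\widetilde{X})\times Map(Y_2,\widetilde{X})$ by the \emph{diagonal} $\Gm$. The form $\overline{\tau(\omega)}\boxplus\tau(\omega)$ is still weight 1 for the diagonal action, so this quotient is contact. The restriction map $i^*$ is equivariant for the diagonal action, because $\Gm$ acts on $\widetilde{X}$ and both restrictions intertwine that single action. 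I expect this step to be the main obstacle. Legendrian correspondences must be defined with the diagonal/reduced product, and I would state explicitly that this is the composition convention for $Leg_{n-d+1}$.

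The third step verifies the Legendrian condition. By definition, $g$ is Legendrian iff the pullback of the principal $\Gm$-bundle along $g$, with its equivariant lift, is Lagrangian. Since $Map(W,\widetilde{X}) \to N$ is a principal $\Gm$-bundle (the action is free because it is free on $\widetilde{X}$ and evaluation is equivariant), the homotopy pullback is identified with $Map(W,\widetilde{X})$, and the lift is $i^*$. Proposition~\ref{prop:equiv_mapping_stack} supplies a $\Gm$-equivariant Lagrangian structure on $i^*$, with the null-homotopy $h$ lying in weight 1. So $g$ is Legendrian, and hence a 1-morphism $M_1 \to M_2$ in $Leg_{n-d+1}$ in the sense of Corollary~\ref{cor:A}.

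Finally, I would remark on well-definedness. Diffeomorphic cobordisms rel boundary give equivalent Betti stacks and hence equivalent spans. Gluing of cobordisms maps to composition via homotopy pullbacks, because $Map(-,\widetilde{X})$ sends pushouts of Betti stacks to pullbacks. These pullbacks are $\Gm$-equivariant and weight-preserving, as in the proof of Theorem~\ref{thm:span}. This compatibility with composition is what makes the correspondence a genuine 1-morphism.
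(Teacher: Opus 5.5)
Your proposal is correct and follows the paper's route. The paper likewise takes the $\Gm$-equivariant Lagrangian span $Map(Y_1,\widetilde{X}) \leftarrow Map(W,\widetilde{X}) \rightarrow Map(Y_2,\widetilde{X})$ from Proposition~\ref{prop:equiv_mapping_stack} and descends it by stack quotient to a Legendrian correspondence between the contact stacks $X_{Y_i} = [Map(Y_i,\widetilde{X})/\Gm]$; your weight-1 check on the boundary stacks and your torsor-pullback identification spell out steps the paper leaves implicit.

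The diagonal-$\Gm$ product is not a new obstacle, since Corollary~\ref{defn:Leg_n} already builds it into the span definition: one torsor $\widetilde{L}$ maps equivariantly to both $\widetilde{X}_i$. Freeness of the action is unnecessary, because $M \to [M/\Gm]$ is always a $\Gm$-torsor. Finally, the gluing compatibility in your last paragraph is the content of Proposition~\ref{prop:AKSZ}, not a condition for a single correspondence to be a 1-morphism.
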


\begin{proof}
By Proposition \ref{prop:equiv_mapping_stack}, the topological cobordism yields a $\Gm$-equivariant Lagrangian span between the mapping stacks $Map(Y_{1}, \widetilde{X})$ and $Map(Y_{2}, \widetilde{X})$. The stack quotient descends this span to a derived Legendrian correspondence between the contact mapping stacks $X_{Y_1}$ and $X_{Y_2}$. Therefore, this defines a 1-morphism in the global category of contact stacks.

\end{proof}

\section{Proof of Main Results}\label{sec:proof}

\noindent We verify the three properties of the derived Legendrian category $\mathcal{F}_{c}(X)$ sequentially.

\begin{enumerate}\itemsep=8pt
    \item We define the \bfem{objects} of $\mathcal{F}_{c}(X)$ to be the Legendrian morphisms $f \colon L \to X$.
    \item Let $f_{1} \colon L_{1} \to X$ and $f_{2} \colon L_{2} \to X$ be Legendrian objects in $\mathcal{F}_{c}(X)$. 
    By Theorem \ref{prev_main}, the homotopy pullback $\widetilde{Z} \simeq \widetilde{L}_{1} \times_{\widetilde{X}}^{h} \widetilde{L}_{2}$ of their equivariant Lagrangian lifts has a $\Gm$-equivariant $(n-1)$-shifted symplectic structure of weight 1.
    By \cite[Theorem 3.7]{IzbudakBerktav}, taking the stack quotient of this derived intersection by the $\Gm$-action yields the stack $Z \simeq [\widetilde{Z}/\Gm]$. This quotient has an $(n-1)$-shifted contact structure. We define the \bfem{morphism space} $\mathcal{H}om_{\mathcal{F}_{c}(X)}(L_{1}, L_{2})$ as $Z$.
    \item Let $L_{0}, L_{1}, L_{2}$ be objects in $\mathcal{F}_{c}(X)$.
    By Theorem \ref{thm:span}, the triple homotopy pullback $\widetilde{Z}_{012}$ provides a span of Lagrangian correspondences projecting onto the pairwise intersections. This span commutes with the weight 1 $\Gm$-action. The equivariant descent translates this span to a \bfem{composition} sequence in the contact base. From \cite[Theorem 2.7]{CHS}, this sequence satisfies the coherence conditions of $Lag_{n}$.
\end{enumerate}This completes the proof of Theorem \ref{thm:A}. From that, we have an immediate corollary:

\begin{corollary}\label{defn:Leg_n}
We promote the collection of $n$-shifted contact derived stacks into a global $(\infty, 2)$-category, written $Leg_n$, by descending the $\Gm$-equivariant $(\infty, 2)$-category of Lagrangian spans. \begin{itemize}
    \item The \emph{objects} of this $(\infty, 2)$-category are $n$-shifted contact derived stacks.
    \item A \emph{1-morphism} between contact stacks $X_{1}$ and $X_{2}$ is a derived Legendrian correspondence formulated as a span \[ X_{1} \leftarrow L \rightarrow X_{2}. \]
This Legendrian correspondence is defined such that the associated principal $\Gm$-bundles form a $\Gm$-equivariant Lagrangian span $\widetilde{X}_{1} \leftarrow \widetilde{L} \rightarrow \widetilde{X}_{2}$ between the derived symplectifications.
    \item Let $L$ and $L'$ be two Legendrian correspondences between $X_{1}$ and $X_{2}$. A \emph{2-morphism} from $L$ to $L'$ is a Legendrian span \[ L \leftarrow M \rightarrow L' \] commuting with the projections to $X_{1}$ and $X_{2}$. This 2-morphism is defined such that the equivariant lift $\widetilde{M}$ forms a $\Gm$-equivariant Lagrangian span between the Lagrangian correspondences $\widetilde{L}$ and $\widetilde{L}'$.
\end{itemize} Call $Leg_n$ the \bfem{$(\infty,2)$-category  of Legendrian correspondences.}  
\end{corollary}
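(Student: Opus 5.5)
The approach is to build $Leg_n$ as a sub-$(\infty,2)$-category of a $\Gm$-equivariant version of $Lag_n$, and then transport it along the equivalence between $\Gm$-equivariant stacks with free (principal) action and stacks over $B\Gm$. The first step is to define $Lag_n^{\Gm}$. It will be the $(\infty,2)$-category of iterated spans in $\mathrm{Fun}(B\Gm,\mathbf{dSt})$, in the sense of \cite[Section 2]{CHS}, with the following data:
\begin{itemize}
\item objects are $n$-shifted symplectic stacks whose form has weight 1;
\item 1- and 2-morphisms are spans equipped with $\Gm$-equivariant Lagrangian structures, with isotropic homotopies taken in the weight 1 piece $\mathcal{A}^{2,\mathrm{cl},\{1\}}$.
\end{itemize}
The CHS construction of $Lag_n$ is built from homotopy limits and from the functoriality of closed forms. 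Limits in $\mathrm{Fun}(B\Gm,\mathbf{dSt})$ are computed pointwise, and pullback of forms along equivariant maps preserves the weight grading, as noted in the proof of Theorem \ref{thm:span}. So the composition of equivariant Lagrangian correspondences stays equivariant, and the weight 1 condition is preserved. Checking this carefully should give coherence by rerunning \cite[Theorem 2.7]{CHS} in the graded mixed setting.

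The second step is descent. We restrict to objects and spans whose $\Gm$-action is free, that is, total spaces of principal $\Gm$-bundles. On these, the functor $\widetilde{Y}\mapsto[\widetilde{Y}/\Gm]$ is an equivalence onto stacks over $B\Gm$, with inverse the pullback along $\mathrm{pt}\to B\Gm$. This equivalence preserves homotopy pullbacks. That is because a fibre product of principal bundles over a principal bundle is again principal, and quotients commute with these limits. These are exactly the derived intersections used in Theorem \ref{prev_main} and Theorem \ref{thm:span}.

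We then transport the full sub-$(\infty,2)$-category of principal objects along this equivalence. Its objects are $n$-shifted contact stacks by the descent theorem of \cite{IzbudakBerktav}. Its 1-morphisms are spans $X_1\leftarrow L\rightarrow X_2$ whose pulled-back spans $\widetilde{X}_1\leftarrow\widetilde{L}\rightarrow\widetilde{X}_2$ are equivariant Lagrangian, which is the definition of a Legendrian correspondence. The 2-morphisms are described in the same way. Composition is defined via the lifts, and is well defined by the first step.

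The main obstacle is checking that the principal (free-action) condition is stable under composition and under the span operations at level 2. In particular, the composite $\widetilde{L}\times^h_{\widetilde{X}_2}\widetilde{L}'$ must again be a principal $\Gm$-bundle over $L\times^h_{X_2}L'$, rather than merely carrying some $\Gm$-action. I would first try to argue this by pulling back along $\mathrm{pt}\to B\Gm$. Every stack in the diagram lies over $B\Gm$ via its line bundle. The compatibility of the line bundles is part of the data of a Legendrian correspondence, since $\widetilde{L}$ is defined as a homotopy pullback of $p$. So the fibre product over $B\Gm$ of the lifts identifies with the lift of the fibre product. If that fails, I would strengthen the definition by taking all stacks in the span diagrams as objects over $B\Gm$ from the outset, and form spans in $\mathbf{dSt}_{/B\Gm}$. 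There the equivalence with principal $\Gm$-stacks is formal.
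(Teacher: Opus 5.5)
Your proposal is correct and follows the same route as the paper. The paper treats the corollary as immediate from Theorem~\ref{thm:A}: form the $\Gm$-equivariant, weight~1 version of the CHS category of Lagrangian spans (equivariant homotopy limits and weight-preserving pullbacks as in Theorem~\ref{thm:span}, coherence from \cite[Theorem 2.7]{CHS}) and descend it along the $\Gm$-quotient. You make that descent step explicit.

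The obstacle you flag is not a real one. For derived stacks, every $\Gm$-action makes $\widetilde{Y}\to[\widetilde{Y}/\Gm]$ a $\Gm$-torsor, so quotienting is an equivalence between $\Gm$-stacks and $\mathbf{dSt}_{/B\Gm}$ that preserves fibre products; your first argument (pulling back along $\mathrm{pt}\to B\Gm$) already works. Working in $\mathbf{dSt}_{/B\Gm}$ is therefore not a fallback but the correct formulation. It also records the identification $f_1^*\cL_1\simeq f_2^*\cL_2$ that a single lift $\widetilde{L}$ tacitly requires.
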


\section{Applications to Derived Moduli Stacks}\label{sec:applications}

We apply the derived Legendrian category $\mathcal{F}_{c}(X)$ to specific derived moduli spaces. The composition operations, endomorphism algebras, and functoriality define the non-commutative and shifted geometry of these spaces.

\subsection{Convolution of Derived Discriminant Loci}
Let $L$ be a smooth $\K$-scheme. The classical 1-jet space $(J^1(L), \xi_{jet})$ carries a canonical $0$-shifted contact structure. A regular function $f \in \cO(L)$ defines a Legendrian embedding $L_F$ of $L$ into $J^1(L)$ via the 1-jet prolongation map $j^1f \colon p \mapsto (p, f(p), d_{dR}f_p)$. Let $L_0$ denote the Legendrian corresponding to the 1-jet of the zero function, $j^1 0$. 

The \bfem{derived discriminant locus} $\Delta\mathrm{loc}(f)$ is defined by the homotopy pullback
\[
    \Delta\mathrm{loc}(f) \simeq L_F \times_{J^1(L)}^h L_0.
\]
This derived intersection captures the contact analogue of the derived critical locus, representing the derived space of points where both $f=0$ and $df=0$. By Theorem \ref{prev_main}, this locus inherits a $(-1)$-shifted contact structure. The space $\Delta\mathrm{loc}(f)$ represents the morphism space $\mathcal{H}om_{\mathcal{F}_{c}(J^1(L))}(L_F, L_0)$ in our category. 
The categorical structure of $\mathcal{F}_{c}(J^1(L))$ equips this intersection with higher composition operations, leading to: 
\begin{proposition}\label{prop: disclocus}
 Given two regular functions $f_1$ and $f_2$, the composition span defined in Theorem \ref{thm:span} yields a derived convolution map
\[
    \mathcal{H}om(L_{F_1}, L_0) \times \mathcal{H}om(L_{F_2}, L_{F_1}) \to \mathcal{H}om(L_{F_2}, L_0).
\]   
\end{proposition}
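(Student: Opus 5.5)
The plan is to specialize the composition span of Theorem \ref{thm:span} to $X = J^1(L)$ with the three objects $L_{F_2}, L_{F_1}, L_0$, and read off the convolution map from it.

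\emph{Step 1: objects and lifts.} First I will check that $L_{F_1}$, $L_{F_2}$ and $L_0$ are objects of $\mathcal{F}_{c}(J^1(L))$. The symplectification of $J^1(L)$ is modeled by $\widetilde{J^1(L)} \simeq T^*(L\times \Gm)$, equivalently the cone over $J^1(L)$, with the weight 1 Liouville action. Under this model, each 1-jet graph $j^1 f_i$ lifts to the conic Lagrangian $\widetilde{L}_{F_i} = \{(p,t,\,t\,d f_i(p), f_i(p))\}$, which is a $\Gm$-equivariant Lagrangian in the sense of Section~\ref{sec:symplectification}. So each $L_{F_i}$ is Legendrian. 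By Section~\ref{sec:morphisms} and Theorem \ref{prev_main}, the pairwise homotopy pullbacks are the hom objects. In particular $\mathcal{H}om(L_{F_1},L_0)\simeq \Delta\mathrm{loc}(f_1)$, $\mathcal{H}om(L_{F_2},L_0) \simeq \Delta\mathrm{loc}(f_2)$, and $\mathcal{H}om(L_{F_2},L_{F_1}) \simeq \Delta\mathrm{loc}(f_2-f_1)$. The last identification uses the contactomorphism of $J^1(L)$ given by fiberwise translation by $-j^1f_1$. This map is $\Gm$-equivariant on the symplectification because it is translation by the exact conic form $t\,d f_1$, and it sends $L_{F_1}\mapsto L_0$.

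\emph{Step 2: the span.} Next I form the triple equivariant pullback $\widetilde{Z}_{210}=\widetilde{L}_{F_2}\times^h_{\widetilde{X}}\widetilde{L}_{F_1}\times^h_{\widetilde{X}}\widetilde{L}_0$ with its projections $\pi_{21},\pi_{10},\pi_{20}$. By Theorem \ref{thm:span} these projections are $\Gm$-equivariant and preserve weight 1 forms. Passing to $\Gm$-quotients gives a span
\[
\mathcal{H}om(L_{F_1},L_0)\times\mathcal{H}om(L_{F_2},L_{F_1}) \xleftarrow{(\pi_{10},\pi_{21})} Z_{210} \xrightarrow{\pi_{20}} \mathcal{H}om(L_{F_2},L_0).
\]
I then identify $Z_{210}$ with the fiber product $\mathcal{H}om(L_{F_1},L_0)\times^h_{L_{F_1}}\mathcal{H}om(L_{F_2},L_{F_1})$, using the pasting law for homotopy pullbacks. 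The convolution map is defined as this correspondence, that is, pull back along the left leg and push forward along $\pi_{20}$. This is exactly the composition in the $(\infty,2)$-category of spans into which Theorem \ref{thm:span} embeds $\mathcal{F}_{c}(J^1(L))$. Concretely, on points it sends a pair of $p$ with $f_1=df_1=0$ and $f_2=f_1,\ df_2=df_1$ to a point with $f_2=df_2=0$, which is the expected convolution.

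\emph{Step 3 and the main obstacle.} The hard part is making sense of the left leg as a map out of the \emph{product}. The triple pullback maps to the fiber product over $L_{F_1}$, not to the absolute product, so the "map" is genuinely a correspondence. I will therefore interpret the displayed arrow as a 1-morphism in $Leg$, namely a Legendrian correspondence. To show it is one, I will verify that $\widetilde{Z}_{210}\to \widetilde{Z}_{10}\times\widetilde{Z}_{21}\times\overline{\widetilde{Z}_{20}}$ is $\Gm$-equivariantly Lagrangian. For this I intend to invoke the Lagrangian-correspondence composition of \cite{CHS} for the $(\infty,2)$-category of spans. The weight 1 condition then follows from Theorem \ref{thm:span}, and descent gives the Legendrian statement via Corollary \ref{defn:Leg_n}.
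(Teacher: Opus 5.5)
Your Steps 1 and 2 are the paper's argument. The paper specializes the composition span of Theorem~\ref{thm:span} to $L_{F_2},L_{F_1},L_0$ and reads the ``convolution map'' as the span of derived stacks $\Delta\mathrm{loc}(f_1)\times(L_{F_2}\times^h_{J^1(L)}L_{F_1})\leftarrow Z_{210}\rightarrow\Delta\mathrm{loc}(f_2)$; it claims nothing more. So the proposition, in the intended sense, is proved at the end of your Step 2. Write $Z_{10}=\mathcal{H}om(L_{F_1},L_0)$, $Z_{21}=\mathcal{H}om(L_{F_2},L_{F_1})$, $Z_{20}=\mathcal{H}om(L_{F_2},L_0)$. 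Your additions are correct: the conic lifts, $Z_{210}\simeq Z_{10}\times^h_{L_{F_1}}Z_{21}$, and $Z_{21}\simeq\Delta\mathrm{loc}(f_2-f_1)$. Two small repairs are needed. First, for $\{(p,t,t\,df_i,f_i)\}$ to be conic, the $\Gm$-action on $T^*(L\times\Gm)$ must scale $t$ and the $T^*L$-fibres, which is the cone action transported; it must not scale all cotangent fibres. Second, the equivariant lift of the translation is translation by $d(tf_1)=t\,df_1+f_1\,dt$. The form $t\,df_1$ is not closed, and translating by it leaves the coordinate $\sigma=f_1$ unchanged, so it does not carry $\widetilde L_{F_1}$ to $\widetilde L_0$.

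Step 3, which you call the main obstacle, is not needed for the statement, and as planned it does not go through.

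\emph{The product is not a contact stack.} $Z_{10}\times Z_{21}$ is not an object of $Leg_{-1}$. Its natural cover $\widetilde Z_{10}\times\widetilde Z_{21}$ is a $\Gm^2$-torsor over it, and $\omega_{10}\boxplus\omega_{21}$ has weight $1$ only for the diagonal $\Gm$. Virtual dimensions confirm this: a $(-1)$-shifted contact stack has virtual dimension $-1$, while the product has $-2$. Hence descending a diagonally equivariant Lagrangian $\widetilde Z_{210}\to\widetilde Z_{10}\times\widetilde Z_{21}\times\overline{\widetilde Z_{20}}$ gives a Legendrian in the contact product $[(\widetilde Z_{10}\times\widetilde Z_{21}\times\overline{\widetilde Z_{20}})/\Gm]$. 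That is a $\Gm^2$-torsor over $Z_{10}\times Z_{21}\times Z_{20}$, so you get a 1-morphism from $[(\widetilde Z_{10}\times\widetilde Z_{21})/\Gm]$ to $Z_{20}$, not from $Z_{10}\times Z_{21}$. Corollary~\ref{defn:Leg_n} supplies no monoidal structure that would fix this.

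\emph{The cited theorem does not give the Lagrangian property.} That the triple intersection is Lagrangian in the product of the pairwise intersections is not the composition theorem of Calaque--Haugseng--Scheimbauer, which only composes correspondences end to end. You need one of the following inputs:
\begin{itemize}
\item Ben-Bassat's theorem on multiple derived Lagrangian intersections; or
\item a proof that the diagonal $\widetilde L_{F_1}\to\widetilde L_{F_1}\times^h_{\widetilde{J^1(L)}}\widetilde L_{F_1}$ is Lagrangian, followed by intersecting with $\widetilde L_{F_2}\times\overline{\widetilde L_0}$ in $\widetilde{J^1(L)}\times\overline{\widetilde{J^1(L)}}$.
\end{itemize}
Also, the weight-$1$ property of the resulting isotropic homotopy must be checked from the weight-$1$ homotopies of the $\widetilde L_{F_i}$. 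Theorem~\ref{thm:span} only says that pulling back forms preserves weight.

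Either stop after Step 2, as the paper does, or restate Step 3 using the contact product and this input.
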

This spans the product $\Delta\mathrm{loc}(f_1) \times (L_{F_2} \times_{J^1(L)}^h L_{F_1})$ into $\Delta\mathrm{loc}(f_2)$. Consequently, the derived Legendrian category provides a convolution algebra for derived discriminant loci as a geometric analogue of the composition of microlocal kernels over derived stacks in the sense of Guillermou and Schapira \cite{GuillermouSchapira}.

\subsection{The Deformation Algebra of the Nilpotent Cone}
Let $C$ be a smooth proper curve over $\K$ and let $G$ be a reductive algebraic group. The derived moduli stack $P\mathrm{Higgs}_G(C)$ of projective Higgs bundles admits a $0$-shifted contact structure. The projectivized derived nilpotent cone $\mathbb{P}\mathcal{N}$ is a Legendrian substack of $P\mathrm{Higgs}_G(C)$ \cite[Theorem 4.1]{IzbudakBerktav}. Therefore $\mathbb{P}\mathcal{N}$ defines an object in the derived Legendrian category $\mathcal{F}_{c}(P\mathrm{Higgs}_G(C))$.

We define the endomorphism space of the projectivized nilpotent cone as the derived intersection
\[
\mathcal{E}nd(\mathbb{P}\mathcal{N}) \simeq \mathcal{H}om_{\mathcal{F}_{c}(P\mathrm{Higgs}_G(C))}(\mathbb{P}\mathcal{N}, \mathbb{P}\mathcal{N}).
\]
\begin{figure}[htbp]
    \centering
    \begin{tikzpicture}[scale=.8]
        \draw[thick] (0,0) ellipse (2.5 and 0.8);
        \node at (2.2, -0.5) {$P\mathrm{Higgs}_G(C)$};

        \draw[thick] (-1.5, -0.2) to[out=30, in=150] (1.5, 0.1);

        \draw[->, thick] (-3, 1.5) -- (-3, 0.5) node[midway, left] {$p$};

        \draw[dashed, gray] (-2.5, 0) -- (-2.5, 2.5);
        \draw[dashed, gray] (2.5, 0) -- (2.5, 2.5);
        \draw[gray] (0, 2.5) ellipse (2.5 and 0.8);
        \node at (2.2, 3) {$T^*[0]\mathrm{Bun}_G(C)^\circ$};

        \filldraw[fill=blue!10, draw=blue!50, thick, opacity=0.8] 
            (-1.5, -0.2) to[out=30, in=150] (1.5, 0.1) -- 
            (1.5, 2.6) to[in=30, out=150] (-1.5, 2.3) -- cycle;
        \node at (0, 1.5) {$\mathcal{N}^\circ$};
        \node at (1.7, 0.4) {$\mathbb{P}\mathcal{N}$};

        \draw[->, thick] (0.5, 0.425) -- (0.5, 1.5) node[midway, right] {$\Gm$};
    \end{tikzpicture}
    \caption{\small The Legendrian inclusion of the projectivized nilpotent cone $\mathbb{P}\mathcal{N}$ into the derived moduli stack of projective Higgs bundles. The Legendrian condition is equivalent to the equivariant lift $\mathcal{N}^\circ$ forming a Lagrangian sub-moduli stack in the derived symplectification.}
\end{figure}
The projectivized derived nilpotent cone $\mathbb{P}\mathcal{N}$ is therefore an object in the derived Legendrian category $\mathcal{F}_{c}(P\mathrm{Higgs}_G(C))$, and hence we get the following proposition. 

\begin{proposition}\label{prop: nilpotentcone}
The categorical composition in $\mathcal{F}_{c}(P\mathrm{Higgs}_G(C))$ equips the homotopy pullback
\[
    \mathbb{P}\mathcal{N} \times_{P\mathrm{Higgs}_G(C)}^h \mathbb{P}\mathcal{N}
\]
with an $A_\infty$-algebra structure in the $(\infty, 2)$-category of Legendrian correspondences.
\end{proposition}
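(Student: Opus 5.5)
The $A_\infty$-structure will come from the standard fact that, in a category of spans, the self-intersection of an object is an algebra object. Here this means: $\mathcal{E}nd(\mathbb{P}\mathcal{N}) \simeq \mathbb{P}\mathcal{N} \times^h_{P\mathrm{Higgs}_G(C)} \mathbb{P}\mathcal{N}$ is an associative (i.e.\ $A_\infty$) algebra object in the $(\infty,2)$-category where composition is given by the spans of Theorem \ref{thm:span}. I will first carry out the construction upstairs on the derived symplectification and then descend it using the weight 1 $\Gm$-action, as in the proof of Theorem \ref{thm:A}.

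\emph{Step 1: build the simplicial object.} By \cite[Theorem 4.1]{IzbudakBerktav}, the lift $\mathcal{N}^\circ \to T^*[0]\mathrm{Bun}_G(C)^\circ$ is a $\Gm$-equivariant Lagrangian. For each $k \geq 0$, let $\widetilde{Z}_k$ be the $(k+1)$-fold homotopy fibre product of $\mathcal{N}^\circ$ over $T^*[0]\mathrm{Bun}_G(C)^\circ$. This is exactly the Čech nerve of $\mathcal{N}^\circ \to T^*[0]\mathrm{Bun}_G(C)^\circ$. The face maps are the projections $\pi_{ij}$ and their iterates. As in the proof of Theorem \ref{thm:span}, they are $\Gm$-equivariant, since they are structure maps of homotopy limits in $\Gm$-equivariant derived stacks. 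Restricting to the spine, i.e.\ the inclusions of the edges $\{i,i+1\}$ into $[k]$, gives Segal maps $\widetilde{Z}_k \to \widetilde{Z}_1 \times_{\mathcal{N}^\circ} \cdots \times_{\mathcal{N}^\circ} \widetilde{Z}_1$. These are equivalences by the pasting law for homotopy pullbacks.

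\emph{Step 2: extract the multiplications and identify them as Lagrangian spans.} The $k$-ary operation is the span
\[
\widetilde{Z}_1^{\times k} \leftarrow \widetilde{Z}_k \rightarrow \widetilde{Z}_1 ,
\]
with left leg the Segal projections and right leg the long edge $\pi_{0k}$. For $k=2$ this is precisely the composition span of Theorem \ref{thm:span} with $L_0=L_1=L_2=\mathbb{P}\mathcal{N}$. By Theorem \ref{prev_main}, $\widetilde{Z}_1$ is weight 1 $(-1)$-shifted symplectic. Each span is then a Lagrangian correspondence by the iterated-intersection results of \cite[Section 2]{CHS}. The higher coherences (associativity up to coherent homotopy) are encoded by the simplicial identities of the Čech nerve. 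Via \cite[Theorem 2.7]{CHS}, these give an algebra object in $Lag_{-1}$, i.e.\ an $A_\infty$-algebra.

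\emph{Step 3: descend.} The whole Čech nerve is a diagram in $\mathrm{Fun}(B\Gm,\mathbf{dSt})$, and the Lagrangian null-homotopies have weight 1, by the weight-preservation argument of Theorem \ref{thm:span}. Taking the quotient $[-/\Gm]$ levelwise therefore gives a simplicial object in $Leg_{-1}$ in the sense of Corollary \ref{defn:Leg_n}. Quotienting by a free action commutes with these fibre products, because $\Gm$-torsors pull back to $\Gm$-torsors. Hence the level-1 piece of the descended object is $\mathbb{P}\mathcal{N}\times^h_{P\mathrm{Higgs}_G(C)}\mathbb{P}\mathcal{N}$, and the Segal condition survives descent.

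The main obstacle is this descent step. Quotienting the $(k+1)$-fold product upstairs by a single $\Gm$ gives the fibre product of the $\mathbb{P}\mathcal{N}$'s over $P\mathrm{Higgs}_G(C)$ only if the torsors over all factors are identified compatibly. I would first check this for $k=1,2$ using the pullback square defining $\widetilde{L}_i$ in Section \ref{sec:morphisms}, where the torsor on $L_1\times^h_X L_2$ is the pullback of $\widetilde{X}\to X$. I would then argue that all the higher $\widetilde{Z}_k$ are pulled back from the single torsor $p$, so the identification holds in every degree coherently.
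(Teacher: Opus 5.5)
Your proposal follows the paper's route. The paper's proof takes the composition span of Theorem~\ref{thm:span} with $L_0=L_1=L_2=\mathbb{P}\mathcal{N}$, asserts that the coherences of $Lag_n$ (\cite[Theorem 2.7]{CHS}) give the higher associativity homotopies, and descends along the weight 1 $\Gm$-action. Your \v{C}ech nerve of $\mathcal{N}^\circ \to T^*[0]\mathrm{Bun}_G(C)^\circ$, with its Segal maps, makes explicit the coherences the paper leaves implicit.

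The obstacle you single out is not really one. Write $X = P\mathrm{Higgs}_G(C)$ and $\widetilde{X} = T^*[0]\mathrm{Bun}_G(C)^\circ$. By definition $\mathcal{N}^\circ \simeq \mathbb{P}\mathcal{N}\times^h_X \widetilde{X}$, so pasting gives $\widetilde{Z}_k \simeq Z_k\times^h_X\widetilde{X}$ naturally in $[k]$. The upstairs nerve is therefore the base change along $p$ of the downstairs one, and $[\widetilde{Z}_k/\Gm]\simeq Z_k$ holds coherently.

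The step that actually fails as written is the descent of the \emph{sources} $\widetilde{Z}_1^{\times k}$ of your operations. The $\Gm$ for which $\omega\boxplus\cdots\boxplus\omega$ has weight 1 is the diagonal one. But $[\widetilde{Z}_1^{\times k}/\Gm]$ is a $\Gm^{k-1}$-torsor over $Z_1^{\times k}$, not $Z_1^{\times k}$ itself.

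In fact $Z_1\times Z_1$ is not an object of $Leg_{-1}$ at all. On a $(-1)$-shifted symplectic stack, $\TT\simeq\LL[-1]$ forces $\mathrm{rk}\,\TT=0$, so every $(-1)$-shifted contact stack has virtual dimension $-1$. On the other hand, $Z_1\times Z_1$ has virtual dimension $-2$. So ``algebra object in $Leg_{-1}$'' has no content until $Leg_{-1}$ carries a monoidal structure, and the Cartesian product is not one. The paper's proof has the same gap: it writes the multiplication as $\mathcal{E}nd(\mathbb{P}\mathcal{N})\times\mathcal{E}nd(\mathbb{P}\mathcal{N})\to\mathcal{E}nd(\mathbb{P}\mathcal{N})$, and Corollary~\ref{defn:Leg_n} specifies no tensor product.

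The repair fits your levelwise descent exactly. Use
$X_1\star X_2 := X_1\times_{B\Gm}X_2\simeq[\widetilde{X}_1\times\widetilde{X}_2/\Gm]$: the fibre product of the classifying maps of the line bundles, with diagonal action and unit $B\Gm=[\mathrm{pt}/\Gm]$. With this product:
\begin{itemize}
\item the multiplication is the Legendrian correspondence $Z_1\times_{B\Gm}Z_1\leftarrow Z_2\to Z_1$, and the left map factors through $Z_1\times_{B\Gm}Z_1$ because both line bundles on $Z_2$ are pulled back from $X$;
\item the unit is $B\Gm\leftarrow\mathbb{P}\mathcal{N}\to Z_1$.
\end{itemize}
This is also what you get by viewing $Z_1$ as the endomorphism $B\Gm\leftarrow Z_1\to B\Gm$ in $Leg_0$, since composition there is exactly $\times_{B\Gm}$.
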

\begin{proof}
Since $\mathbb{P}\mathcal{N}$ is an object in $\mathcal{F}_{c}(P\mathrm{Higgs}_G(C))$, Theorem \ref{thm:span} yields a derived span of Legendrian correspondences. This span induces a composition map
\[
    \mathcal{E}nd(\mathbb{P}\mathcal{N}) \times \mathcal{E}nd(\mathbb{P}\mathcal{N}) \to \mathcal{E}nd(\mathbb{P}\mathcal{N})
\]
in the contact base. The coherence conditions of the $(\infty, 2)$-category $Lag_{n}$ induce higher associativity homotopies for this composition operation. Thus, the endomorphism space $\mathcal{E}nd(\mathbb{P}\mathcal{N})$ acquires the structure of an $A_\infty$-algebra. Furthermore, Theorem \ref{prev_main} dictates that $\mathcal{E}nd(\mathbb{P}\mathcal{N})$ inherits a $(-1)$-shifted contact structure. This algebra governs the non-commutative deformation theory of the projectivized nilpotent cone.

\end{proof}

\subsection{Topological Field Theories and Functoriality}
Let $Y$ be a closed oriented topological manifold of dimension $d-1$. Let $\widetilde{X}$ be a $k$-shifted symplectic derived stack equipped with a weight 1 $\Gm$-action. By Theorem \ref{ptvv_a}, the derived mapping stack $Map(Y, \widetilde{X})$ has a $(k-d+1)$-shifted symplectic structure. Since the fundamental class integration commutes with the target $\Gm$-action, $Map(Y, \widetilde{X})$ has a weight 1 $\Gm$-action. The stack quotient $X_Y \simeq [Map(Y, \widetilde{X}) / \Gm]$ has a $(k-d+1)$-shifted contact structure.

Let $\mathrm{Bord}_d$ denote the symmetric monoidal $\infty$-category of oriented $d$-dimensional cobordisms \cite[Section 1.2]{CHS}. Let $Leg_{n}$ denote the global $(\infty, 2)$-category of $n$-shifted contact derived stacks introduced in Corollary \ref{defn:Leg_n}. Then we provide:

\begin{proposition} \label{prop:AKSZ}
The AKSZ construction extends to a symmetric monoidal $\infty$-functor \[ \mathcal{Z}_{c} \colon \mathrm{Bord}_d \to Leg_{k-d+1} \] capturing derived Legendrian surgery.
\end{proposition}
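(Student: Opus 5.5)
The plan is to build $\mathcal{Z}_{c}$ as the descent of the $\Gm$-equivariant AKSZ functor $\mathcal{Z}_{\widetilde{X}} \colon \mathrm{Bord}_d \to Lag_{k-d+1}$ of \cite{CHS}, using the mapping-stack constructions already set up in Section~\ref{sec:cobordisms}.

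\emph{Step 1: objects.} To a closed oriented $(d-1)$-manifold $Y$ assign $X_Y \simeq [Map(Y, \widetilde{X})/\Gm]$. By Theorem~\ref{ptvv_a}, $Map(Y,\widetilde{X})$ is $(k-d+1)$-shifted symplectic. The $\Gm$-action is induced pointwise in $\mathrm{Fun}(B\Gm, \mathbf{dSt})$, as in the proof of Proposition~\ref{prop:equiv_mapping_stack}.

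Next we must check that the transgressed form has weight $1$. I would do this by writing $\tau = \int_{[Y]} \circ\, ev^*$ and noting two things:
\begin{itemize}
\item $ev \colon Y_B \times Map(Y,\widetilde{X}) \to \widetilde{X}$ is $\Gm$-equivariant when $Y_B$ carries the trivial action, so $ev^*$ preserves the fiber grading;
\item $\int_{[Y]}$ is built from the $\cO$-orientation of $Y_B$, which lies in weight $0$, so it is $\Gm$-linear.
\end{itemize}
Hence $\tau$ restricts to a map $\mathcal{A}^{2,\mathrm{cl},\{1\}}(\widetilde{X},k) \to \mathcal{A}^{2,\mathrm{cl},\{1\}}(Map(Y,\widetilde{X}),k-d+1)$. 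The quotient theorem of \cite{IzbudakBerktav} then makes $X_Y$ an object of $Leg_{k-d+1}$.

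\emph{Step 2: 1- and 2-morphisms.} To a cobordism $W$ assign the span produced by Proposition~\ref{prop:equiv_mapping_stack}, and then Theorem~\ref{thm:surgery} gives its descent $X_{Y_1} \leftarrow [Map(W,\widetilde{X})/\Gm] \rightarrow X_{Y_2}$. For the $(\infty,2)$ level, cobordisms between cobordisms (manifolds with corners) should go to iterated spans. I would run the same relative-orientation argument with the pair $(W_B, \partial W_B)$ replaced by the corner stratification, again in weight $0$. This produces $\Gm$-equivariant Lagrangian spans between Lagrangian correspondences, which are exactly the 2-morphisms of Corollary~\ref{defn:Leg_n}.

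\emph{Step 3: functoriality and monoidality.} Gluing $W \cup_{Y_2} W'$ gives a pushout of Betti stacks. Since $Map(-,\widetilde{X})$ turns colimits in the source into limits, and the construction is compatible with the $\Gm$-actions, we get
\[
Map(W\cup_{Y_2} W',\widetilde{X}) \simeq Map(W,\widetilde{X}) \times^h_{Map(Y_2,\widetilde{X})} Map(W',\widetilde{X})
\]
$\Gm$-equivariantly, so composition in $\mathrm{Bord}_d$ goes to composition of spans. Higher coherences follow from those of the \cite{CHS} functor, applied in $\Gm$-equivariant stacks. For disjoint unions, $Map(Y \amalg Y',\widetilde{X}) \simeq Map(Y,\widetilde{X}) \times Map(Y',\widetilde{X})$.

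\emph{Expected obstacle.} The monoidal structure is where I expect the real difficulty. The product $Map(Y,\widetilde{X}) \times Map(Y',\widetilde{X})$ has a $\Gm \times \Gm$-action, and its quotient by the diagonal $\Gm$ is not $X_Y \times X_{Y'}$. The tensor product in $Leg_n$ therefore has to be defined as a contact product: quotient by the diagonal $\Gm$, using that the symplectic form $\omega_1 \boxplus \omega_2$ has weight $1$ for the diagonal action. The thing to verify first is that this diagonal quotient is associative and unital up to coherent equivalence. Only then can symmetric monoidality of $\mathcal{Z}_{c}$ be transported from that of $\mathcal{Z}_{\widetilde{X}}$.

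A second delicate point is whether descent along $\Gm$-quotients commutes with the homotopy pullbacks used in composition. This holds because $\Gm$-quotients of principal bundles are pullback-stable. The caveat is that fiber products over $X_{Y_2}$ and over $Map(Y_2,\widetilde{X})$ differ by a $\Gm$-factor, so the gluing must be carried out upstairs, equivariantly, and descended only at the end.
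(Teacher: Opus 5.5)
Your treatment of objects, 1-morphisms and composition follows the paper's own route: $Y\mapsto X_Y=[Map(Y,\widetilde{X})/\Gm]$, then $W\mapsto L_W=[Map(W,\widetilde{X})/\Gm]$ via Proposition~\ref{prop:equiv_mapping_stack} and Theorem~\ref{thm:surgery}, then gluing $\mapsto$ homotopy pullback $\mapsto$ composition via Theorem~\ref{thm:span}. Your weight-$1$ check on $\tau=\int_{[Y]}\circ ev^*$ is a more careful version of the paper's one-line remark before the proposition.

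The paper's proof, however, stops at functoriality. It never puts a tensor product on $Leg_n$, and it never mentions monoidality or 2-morphisms. The obstacle you flag is therefore a genuine gap, in your proposal and in the paper alike, and you have diagnosed it correctly. Since $Map(Y\amalg Y',\widetilde{X})\simeq Map(Y,\widetilde{X})\times Map(Y',\widetilde{X})$ carries the \emph{diagonal} action, $\mathcal{Z}_{c}(Y\amalg Y')$ is not $X_Y\times X_{Y'}$; the latter is a further quotient by a second copy of $\Gm$. While the monoidal structure on the target is undefined, ``symmetric monoidal'' has no content.

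The gap closes more cheaply than you expect. Stacks with $\Gm$-action are equivalent to stacks over $B\Gm$ via $\widetilde{X}\mapsto[\widetilde{X}/\Gm]$. Under this equivalence the diagonal quotient is $[(\widetilde{X}_1\times\widetilde{X}_2)/\Gm]\simeq X_1\times_{B\Gm}X_2$, which is the cartesian product in $\mathbf{dSt}_{/B\Gm}$. Associativity, symmetry and unitality are then automatic. The unit is $B\Gm=[\mathrm{pt}/\Gm]$ with the zero form, which is exactly $\mathcal{Z}_{c}(\emptyset)$. What remains to check is that $\omega_1\boxplus\omega_2$ has weight $1$ for the diagonal action and is the transgression over $Y\amalg Y'$; the latter holds since $\int_{[Y\amalg Y']}=\int_{[Y]}+\int_{[Y']}$. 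Correspondingly, read the spans in Corollary~\ref{defn:Leg_n} in $\mathbf{dSt}_{/B\Gm}$: the single torsor $\widetilde{L}$ identifies the two pulled-back torsors. Then $L_W$ is Legendrian in $X_{Y_1}\times_{B\Gm}X_{Y_2}$.

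The same equivalence removes your second caveat. Pullbacks in a slice are computed in the underlying $\infty$-category, so $L_W\times_{X_{Y_2}}L_{W'}\simeq[(Map(W,\widetilde{X})\times^h_{Map(Y_2,\widetilde{X})}Map(W',\widetilde{X}))/\Gm]$ exactly. The upstairs pullback is just its $\Gm$-torsor, so gluing upstairs or downstairs gives the same answer.

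Finally, the corner argument in your Step~2 is unnecessary. As the paper uses it, $\mathrm{Bord}_d$ is an $(\infty,1)$-category whose 2-cells are diffeomorphisms, and these go to equivalences of spans. Genuine cobordisms between $d$-dimensional cobordisms are $(d+1)$-dimensional, are Lagrangian in $(k-d)$-shifted intersections, and live in a larger bordism category.
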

\begin{proof}
We construct the functor $\mathcal{Z}_{c}$ by combining the classical AKSZ functor with equivariant descent. To a closed oriented $(d-1)$-manifold $Y$, the functor assigns the $(k-d+1)$-shifted contact stack $X_Y \simeq [Map(Y, \widetilde{X}) / \Gm]$. To an oriented $d$-dimensional cobordism $W$ between $Y_1$ and $Y_2$, the functor assigns the derived stack $L_W \simeq [Map(W, \widetilde{X}) / \Gm]$. By Proposition \ref{prop:equiv_mapping_stack}, $Map(W, \widetilde{X})$ forms a $\Gm$-equivariant Lagrangian span between $Map(Y_1, \widetilde{X})$ and $Map(Y_2, \widetilde{X})$. Its descent $L_W$ thus defines a Legendrian correspondence between $X_{Y_1}$ and $X_{Y_2}$, representing a 1-morphism in $Leg_{k-d+1}$.

The topological gluing of cobordisms $W_1 \cup_Y W_2$ corresponds to the homotopy pullback of the mapping stacks over $Map(Y, \widetilde{X})$. By Theorem \ref{thm:span}, this equivariant pullback descends to the categorical composition of Legendrian spans in $Leg_{k-d+1}$. Therefore, $\mathcal{Z}_{c}$ preserves the composition structure of cobordisms and defines an $\infty$-functor.

\end{proof}

This functoriality provides a categorical framework for \bfem{derived Legendrian surgery}. The geometric gluing of cobordisms maps directly to the associative composition operation in the global contact category. This evaluates the gluing of moduli problems on $d$-manifolds as the composition of local Legendrian boundary conditions.

\subsection{Outlook: Towards Derived Contact Surgery}
Our construction of the derived Legendrian category $\mathcal{F}_{c}(X)$ and the formulation of topological cobordisms as Legendrian spans (Theorem \ref{thm:surgery}) provide the necessary setup to define surgery operations directly on contact stacks. In classical contact topology, Weinstein surgery is performed by attaching isotropic and Legendrian handles. In the derived setting, a \bfem{derived contact surgery} along a Legendrian morphism $f \colon L \to X$ should correspond to a specific quotient of the derived symplectification of $X$, modified by the Lagrangian span associated with the surgery cobordism. 

Future work will explore how to compute the effect of such surgeries on the resulting shifted contact structures using this $(\infty, 2)$-categorical framework. In particular, we aim to establish how the derived mapping stacks of surgery traces behave under $\Gm$-equivariant descent. This will yield a systematic method for constructing new shifted contact moduli spaces from existing ones via categorical gluing.

\section*{Acknowledgements}
The first author (E.\.{I}.) wishes to express his deepest gratitude to the second author (K.\.{I}.B.) for his invaluable mentorship, generous guidance, and continuous support. The central question addressed in this paper concerning the construction of a derived Legendrian category was explicitly posed by K.\.{I}.B., whose core ideas and foundational work in derived contact geometry provide the overarching conceptual framework for this text. The specific categorical constructions and the proofs of the main theorems were subsequently developed by E.\.{I}. 

Both authors warmly thank the Higher Structures Group in the Math  Department at Middle East Technical University for fostering a stimulating research environment.

\end{document}